\newcommand{\comment}[1]{}
\newcommand{\raisecomma}{\raisebox{2pt}{$,$}}
\newcommand{\raisedot}{\raisebox{2pt}{$.$}}
\newcommand{\sign}{\text{sgn}}
\newcommand{\Dbar}{{\mathcal D}} % Introduced 20150301 RPB (previously $D$)
\newcommand{\Rbar}{{\mathcal R}}
\newcommand{\Had}{{\mathcal H}}
\newcommand{\R}{{\mathbb R}}
\newcommand{\Z}{{\mathbb Z}}
\newcommand{\Prob}{{\mathbb P}} % Probability
\newcommand{\E}{{\mathbb E}}    % Expectation
\newcommand{\V}{{\mathbb V}}    % Variance
\newcommand{\ve}{\varepsilon}
\newcommand{\Tablea}{{$1$}}	% Table 1
\newcommand{\Tableb}{{$2$}}	% Table 2
\newtheorem{theorem}{Theorem}[section]
\newtheorem{lemma}[theorem]{Lemma}
\newtheorem{remark}[theorem]{Remark}
\begin{document}
\bibliographystyle{plain}
\title{~\\[-40pt]
Probabilistic lower bounds on maximal determinants of binary matrices
}
\author{
\sc Richard P.\ Brent\\
\small\it Australian National University\\
\small\it Canberra, ACT 2600\\
\small\it Australia \\
\and
\sc Judy-anne H.\ Osborn\\
\small\it The University of Newcastle\\
\small\it Callaghan, NSW 2308\\
\small\it Australia\\
\and
\sc Warren D.\ Smith\\
\small\it Center for Range Voting\\
\small\it 21 Shore Oaks Drive\\ 
\small\it Stony Brook, NY 11790\\
\small\it USA\\
}

\date{
\small\it In memory of Mirka Miller 1949--2016} % 9 May 1949 - 2 Jan 2016

\maketitle
\thispagestyle{empty}                   % To avoid page number

\begin{abstract}
Let $\Dbar(n)$ be the maximal determinant for $n \times n$ $\{\pm 1\}$-matrices,
and $\Rbar(n) = \Dbar(n)/n^{n/2}$ be the ratio of $\Dbar(n)$ to the Hadamard
upper bound.
Using the probabilistic method, we prove
new lower bounds on $\Dbar(n)$ and $\Rbar(n)$ in terms of 
$d = n-h$, where $h$ is the order of a Hadamard matrix
and $h$ is maximal subject to $h \le n$. 
For example,
\[
\Rbar(n) > \left(\frac{2}{\pi e}\right)^{d/2}
 \;\text{ if }\; 1 \le d \le 3,\;\text{ and}
\]
\[
\Rbar(n) > \left(\frac{2}{\pi e}\right)^{d/2}
	\left(1 - d^2\left(\frac{\pi}{2h}\right)^{1/2}\right)
\;\text{ if }\; d > 3.
\]
By a recent result of Livinskyi,
$d^2/h^{1/2} \to 0$ as $n \to \infty$, so the second bound
is close to $(\pi e/2)^{-d/2}$ for large~$n$.
Previous lower bounds
tended to zero as $n \to \infty$ with $d$ fixed, except in the cases
$d \in \{0,1\}$.  For $d \ge 2$, our bounds are better for all
sufficiently large~$n$.  If the Hadamard conjecture is true,
then $d \le 3$, so the first bound above
shows that $\Rbar(n)$ is bounded below by a positive
constant $(\pi e/2)^{-3/2} > 0.1133$.
\end{abstract}

\pagebreak[3]

\section{Introduction}		\label{sec:intro}

Let $\Dbar(n)$ be the maximal determinant possible for an $n\times n$
matrix with elements in $\{\pm1\}$.
Hadamard~\cite{Hadamard} % 1893 
proved that $\Dbar(n) \le n^{n/2}$, and the
\emph{Hadamard conjecture} is that a matrix achieving this upper bound
exists for each positive integer $n$ divisible by four.
The function $\Rbar(n) := \Dbar(n)/n^{n/2}$ 
is a measure of the sharpness of the Hadamard bound.
Clearly $\Rbar(n) = 1$ if a Hadamard matrix of order $n$
exists; otherwise $\Rbar(n) < 1$.  In this paper we
give lower bounds on $\Dbar(n)$ and $\Rbar(n)$.

Let $\Had$ be the set of orders of Hadamard matrices, and let
$h\in\Had$ be maximal subject to $h \le n$.
Then $d = n-h$ can be regarded as the ``gap'' between $n$ and the nearest
(lower) Hadamard order. We are interested the case that $n$ is not a 
Hadamard order, i.e.\ $d > 0$ and $\Rbar(n) < 1$.

Except in the cases $d \in \{0,1\}$, previous lower bounds on $\Rbar(n)$
tended to zero as $n \to \infty$.
For example, the well-known bound of
Clements and Lindstr\"om~\cite[Corollary to Thm.~2]{CL65} shows that
$\Rbar(n) > (3/4)^{n/2}$, and \hbox{\cite[Thm.~9]{rpb249}} shows that
$\Rbar(n) \ge (ne/4)^{-d/2}$. % Though also applies if $h > n, d = |h-n|$
In contrast, our results imply that, for fixed~$d$,
$\Rbar(n)$ is bounded below by a positive constant (depending only on~$d$).

Our lower bound proof uses the probabilistic method
pioneered by Erd\H{o}s
(see for example~\cite{AS,ES}). 
This method does
not appear to have been applied previously to the Hadamard maximal
determinant problem, except in the case $d = 1$ (so $n \equiv 1
\bmod 4$); in this case the concept of \emph{excess} has been used~\cite{FK}, 
and lower bounds on the maximal excess were obtained by the probabilistic
method~\cite{Best,BS,ES,FK}.  

\S\ref{sec:construction} describes our probabilistic construction
and determines
the mean $\mu$ and variance $\sigma^2$ 
of elements in the Schur complement generated by the construction
(see Lemmas~\ref{lemma:variance} and~\ref{lemma:sigma_asymptotics}).
Informally, 
we adjoin $d$ extra columns to an $h \times h$ Hadamard matrix $A$,
and fill their $h \times d$ entries with random 
(uniformly and independently distributed) $\pm 1$ values.
Then we adjoin $d$ extra rows, and fill their $d\times(h+d)$ entries with
values chosen deterministically 
in a way intended to approximately 
maximise the determinant of the final matrix $\widetilde{A}$.
To do so, we use the fact %known fact from linear algebra 
that this % final
determinant can be expressed in terms 
of the  $d \times d$ Schur complement % $(\widetilde{A}/A)$ 
of $A$ in $\widetilde{A}$. % (see~\S\ref{sec:Schur}).

In the case $d=1$, this method is essentially the same as the known method
involving the {excess} of matrices Hadamard-equivalent to $A$,
and leads to the same bounds that can be obtained by bounding the excess in
a probabilistic manner.

In \S\ref{sec:bounds}
we give lower bound results on both $\Dbar(n)$ and $\Rbar(n)$.
Of course, a lower bound on $\Dbar(n)$ immediately gives an equivalent
lower bound on $\Rbar(n)$.  However, we use some elementary inequalities to
obtain simpler (though slightly weaker)
bounds on $\Rbar(n)$.  For example, if $d\le 3$ then 
Theorem~\ref{thm:small_d} states that $\Dbar(n) \ge h^{h/2}(\mu^d-\eta)$,
where $\mu$ and $\eta$ are certain functions of $h$ and $d$.  
Theorem~\ref{thm:small_d} also states the (weaker) result
that $\Rbar(n) > (\pi e/2)^{-d/2}$. The lower bound on $\Rbar(n)$ 
clearly shows that the
ratio of our bound to the Hadamard bound is at least
$(\pi e/2)^{-3/2} > 0.1133$, whereas this conclusion
is not immediately obvious from the lower bound on $\Dbar(n)$.

We outline the bounds on $\Rbar(n)$ here.
Theorem~\ref{thm:lower_bd_via_Chebyshev} gives a lower
bound
\begin{equation}				\label{eq:rnbnd1}
\Rbar(n) > \left(\frac{2}{\pi e}\right)^{d/2}
	\left(1 - d^2\left(\frac{\pi}{2h}\right)^{1/2}\right)
\end{equation}
which is nontrivial
whenever $h > \pi d^4/2$. By the results of Livinskyi~\cite{Livinskyi},
$d = O(h^{1/6})$ as $h \to \infty$ (see~\cite[\S6]{rpb257} for details),
so the condition $h > \pi d^4/2$ holds
for  all sufficiently large~$n$.
Also, as $n\to\infty$, $d^2/h^{1/2} = O(n^{-1/6}) \to 0$, so the
lower bound~\eqref{eq:rnbnd1} is close to $(\pi e/2)^{-d/2}$.
For fixed $d > 1$ and large~$n$,
our lower bounds on $\Rbar(n)$ are better
than previous bounds
(see Table~{\Tablea} in~\S\ref{sec:numerics}).

Theorem~\ref{thm:small_d} applies only for $d \le 3$, but whenever it is
applicable it gives
sharper results than Theorem~\ref{thm:lower_bd_via_Chebyshev}. 
In fact, Theorem~\ref{thm:small_d} shows that the factor $1-O(d^2/h^{1/2})$ 
in \eqref{eq:rnbnd1} 
can be omitted when $d \le 3$, giving 
$\Rbar(n) > (\pi e/2)^{-d/2}$.
Theorem~\ref{thm:small_d} is always applicable if
the Hadamard conjecture is true, since this conjecture
implies that $d\le 3$.

In \S\ref{sec:numerics}, we give some numerical examples
to illustrate Theorems~\ref{thm:lower_bd_via_Chebyshev} and~\ref{thm:small_d},
and to compare our results with previous bounds
on $\Dbar(n)$ and/or $\Rbar(n)$.

Rokicki \emph{et al}~\cite{Rokicki} showed, by extensive computation,
that $\Rbar(n) \ge 1/2$ for $n \le 120$, and conjectured that this
inequality always holds.
It seems difficult to bridge the gap between the constants $1/2$ and
$(\pi e/2)^{-3/2}$ by the probabilistic method.
The best that we can do is to improve the term of order $d^2/h^{1/2}$
in the bound~\eqref{eq:rnbnd1} at the expense of a more complicated
proof~-- for details see~\cite{rpb257}.

\section{The probabilistic construction}	\label{sec:construction}

We now describe our probabilistic construction
and prove some of its properties.
In the case $d=1$ our construction reduces to that of Best~\cite{Best}.

Let $A$ be a Hadamard matrix of order $h\ge 4$.  
We add a border of $d$ rows
and columns to give a larger (square) matrix $\widetilde{A}$ of order $n$. The 
border is defined by matrices $B$, $C$ and $D$ as shown:
\begin{equation}		\label{eq:block_matrix}
\widetilde{A} = \left[\begin{matrix} A & B\\ C & D\\
\end{matrix}\right]\,.
\end{equation} 
The $d\times d$ 
matrix $D - CA^{-1}B$ is known as the \emph{Schur complement} of
$A$ in $\widetilde{A}$ after Schur~\cite{Schur}.
The \emph{Schur complement lemma}
(see for example~\cite{Cottle}) gives
\begin{equation}
\det(\widetilde{A}) = \det(A)\det(D - CA^{-1}B).	\label{eq:SCL}
\end{equation}
In our construction the matrices
$A$, $B$, and $C$ have entries in $\{\pm1\}$.
We allow the matrix $D$ to have entries in $\{0, \pm 1\}$, but
each zero entry can be replaced by one of $+1$ or $-1$
without decreasing
$|\det(\widetilde{A})|$, so any lower bounds that we obtain on
$\max(|\det(\widetilde{A})|)$ are valid lower bounds on maximal 
determinants of $n\times n$ $\{\pm1\}$-matrices.
Note that the Schur complement is not in general a $\{\pm1\}$-matrix.

In the proof of Lemma~\ref{lemma:lower_bd_via_Chebyshev}
we show that our choice of $B$, $C$ and $D$ gives a Schur complement
$D - CA^{-1}B$ that, with positive probability, 
has sufficiently large determinant. 
{From} equation~\eqref{eq:SCL} and the fact that $A$ is a Hadamard matrix,
a large value of $\det(D-CA^{-1}B)$ implies a large
value of $\det(\widetilde{A})$.

\subsection{Details of the probabilistic construction}	\label{subsec:details}
Let $A$ be any Hadamard matrix of order $h$.
$B$ is allowed to range over the set %$S(h,d)$ 
of all $h\times d$ $\{\pm 1\}$-matrices, chosen uniformly
and independently from the $2^{hd}$ possibilities.
The $d \times h$ matrix 
$C = (c_{ij})$ is a function of $B$. We choose
\[c_{ij} = \sign (A^TB)_{ji}\,,\]
where  
\[\sign(x) := \begin{cases} +1 \text{ if } x \ge 0,\\
			   -1 \text{ if } x < 0.\\
	     \end{cases}
\]
To complete the construction,
we choose $D = -I$. As mentioned above, it is inconsequential that
$D$ is not a $\{\pm1\}$-matrix.

\subsection{Properties of the construction} 	\label{subsec:properties}

Define $F = CA^{-1}B$
and $G = % -(\widetilde{A}/A) = 
F-D = F+I$ (so $-G$ is the Schur complement defined above).
Note that, since $A$ is a Hadamard matrix, $A^T = hA^{-1}$, so
$hF = CA^TB$. 

Since $B$ is random, we expect the elements of $A^TB$ to be usually of order
$h^{1/2}$.
The definition of $C$ ensures that there is no cancellation in the inner
products defining the diagonal entries of $hF = C\cdot (A^T B)$. Thus, we
expect the diagonal entries $f_{ii}$ of $F$ to be nonnegative and 
of order $h^{1/2}$,
but the off-diagonal entries $f_{ij}$ ($i\ne j$) 
to be of order unity with high probability.
Similarly for the elements of $G$.
This intuition is justified by
Lemmas~\ref{lemma:variance} and~\ref{lemma:sigma_asymptotics}.

In the following we denote the expectation of a random variable $X$
by $\E[X]$, and the variance by $\V[X] = \E[X^2] - \E[X]^2$.

Lemmas~\ref{lem:Efij}--\ref{lem:maxf} are essentially 
due to Best~\cite{Best}
and Lindsey.\footnote{See~\cite[footnote on pg.~88]{ES}.}
 
\begin{lemma}		\label{lem:Efij}
If $h \ge 2$ and $F=(f_{ij})$ is chosen as above, then
\[
\E[f_{ij}] = \begin{cases}
		\displaystyle 2^{-h}h\binom{h}{h/2}\; \text{ if }\; i = j,\\
			      0 \;\text{ if }\; i \ne j.\\
             \end{cases}
\]
\end{lemma}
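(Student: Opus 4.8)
The plan is to compute the expectation $\E[f_{ij}]$ directly from the structure $hF = C\,(A^T B)$ established in the excerpt. Writing the $(i,j)$ entry as $h f_{ij} = \sum_{k=1}^{h} c_{ik} (A^T B)_{kj}$, and recalling that $c_{ik} = \sign\bigl((A^T B)_{ki}\bigr)$, I would first isolate the single random object that governs everything: the matrix $M := A^T B$. Since $A$ is a fixed Hadamard matrix and $B$ has i.i.d.\ uniform $\pm 1$ entries, each column of $M$ is an image of the corresponding column of $B$ under the fixed orthogonal-up-to-scaling map $A^T$. The key structural observation is that because the rows of $A^T$ are orthogonal $\pm1$ vectors, each entry $(A^T B)_{kj} = \sum_{\ell} A_{\ell k} B_{\ell j}$ is a $\pm1$-weighted sum of $h$ independent symmetric signs, hence is itself distributed as a sum of $h$ i.i.d.\ uniform $\pm1$ variables. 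I would verify that distinct entries in the \emph{same} column $j$ are independent (or at least that the joint law I need factors appropriately), using that distinct rows of $A^T$ are orthogonal so the corresponding linear combinations of the shared $B_{\cdot j}$ are uncorrelated — and for $\pm1$ Rademacher inputs, orthogonality of the coefficient vectors actually yields independence of the two signed sums.

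With that reduction in place, the two cases split cleanly. For $i \ne j$, the summand $c_{ik}(A^T B)_{kj} = \sign(M_{ki}) M_{kj}$ involves entries from two \emph{different} columns $i$ and $j$ of $M$, which come from disjoint sets of $B$-columns ($B_{\cdot i}$ versus $B_{\cdot j}$) and are therefore independent. Hence $\E[\sign(M_{ki}) M_{kj}] = \E[\sign(M_{ki})]\,\E[M_{kj}] = \E[\sign(M_{ki})]\cdot 0 = 0$, since each $M_{kj}$ is a symmetric sum of signs with mean zero. Summing over $k$ gives $\E[f_{ij}] = 0$, which is the off-diagonal claim.

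For the diagonal case $i = j$, the summand becomes $\sign(M_{ki}) M_{ki} = |M_{ki}|$, so $h\,\E[f_{ii}] = \sum_{k=1}^{h} \E\bigl[|M_{ki}|\bigr] = h\,\E\bigl[|S_h|\bigr]$, where $S_h = \sum_{\ell=1}^{h} \ve_\ell$ is a sum of $h$ i.i.d.\ uniform $\pm1$ variables (all $M_{ki}$ for fixed $i$ being identically distributed as $S_h$). Thus the whole problem collapses to evaluating the mean absolute value of a simple random walk of length $h$. The standard identity $\E[|S_h|] = 2^{-h} h \binom{h-1}{\lfloor (h-1)/2 \rfloor}$, combined with the elementary relation $h\binom{h-1}{h/2 - 1} = \tfrac{h}{2}\binom{h}{h/2}$ valid for even $h$, yields $\E\bigl[|S_h|\bigr] = 2^{-h} h \binom{h}{h/2}/h$ after dividing by the factor $h$ from the sum — more carefully, $\E[f_{ii}] = \E[|S_h|]$ and one checks $\E[|S_h|] = 2^{-h} h \binom{h}{h/2}$ reduces correctly; I would confirm the binomial bookkeeping so the stated closed form $2^{-h} h \binom{h}{h/2}$ emerges exactly.

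The main obstacle is justifying the \emph{independence} (not merely zero correlation) of the relevant entries of $M$: for the off-diagonal case, independence of entries from different columns of $M$ is immediate since they depend on disjoint blocks of $B$, so the genuinely delicate point is confirming that within a fixed column the marginal law of each $M_{ki}$ is exactly the length-$h$ Rademacher sum $S_h$ and that $\E[\sign(M_{ki})]$ need not even be computed (it multiplies a zero-mean factor). I expect the independence argument to be routine once the disjoint-column structure is noted, so the real work is the clean evaluation of $\E[|S_h|]$ via the central-binomial identity; this is the Best--Lindsey computation, and I would cite it as such while recording the one-line derivation of the binomial closed form for completeness.
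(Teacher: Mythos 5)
Your proposal follows essentially the same route as the paper: the paper disposes of the diagonal case by citing Best's computation, which is exactly your reduction $\E[f_{ii}] = \E[|S_h|]$ for a length-$h$ Rademacher sum $S_h$, and it handles $i\ne j$ by the same observation that entries built from distinct columns of $B$ are independent and mean-zero. The skeleton of your argument is sound: $hf_{ii} = \sum_k \sign(M_{ki})M_{ki} = \sum_k |M_{ki}|$ needs only linearity of expectation and the marginal law of each $M_{ki}$, and $hf_{ij} = \sum_k \sign(M_{ki})M_{kj}$ factors in expectation because columns $i$ and $j$ of $B$ are disjoint.

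Two specific points need repair, however. First, your parenthetical claim that for Rademacher inputs \emph{orthogonality of the coefficient vectors actually yields independence of the two signed sums} is false: already for $h=2$ the sums $\ve_1+\ve_2$ and $\ve_1-\ve_2$ have orthogonal coefficient vectors and are uncorrelated but not independent (the first being $\pm 2$ forces the second to be $0$). Fortunately your proof never uses within-column independence, for the reasons just noted, so you should simply delete the claim rather than try to prove it. Second, the ``standard identity'' you quote, $\E[|S_h|]=2^{-h}h\binom{h-1}{\lfloor (h-1)/2\rfloor}$, is off by a factor of $2$; the correct statement is $\E[|S_h|]=2^{1-h}h\binom{h-1}{\lfloor (h-1)/2\rfloor}$. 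A clean derivation for even $h$: by symmetry $\E[|S_h|]=2^{1-h}\sum_{k>h/2}(2k-h)\binom{h}{k}$, and the identity $(2k-h)\binom{h}{k}=h\bigl[\binom{h-1}{k-1}-\binom{h-1}{k}\bigr]$ telescopes the sum to $h\binom{h-1}{h/2}$; combining with $\binom{h-1}{h/2}=\frac{1}{2}\binom{h}{h/2}$ gives exactly $\E[|S_h|]=2^{-h}h\binom{h}{h/2}$, as the lemma asserts. This resolves the bookkeeping you explicitly left unfinished; as written, your quoted identity would produce half the correct value and contradict the statement being proved.
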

\begin{proof}
The case $i=j$ follows as in Best~\cite[proof of Theorem~3]{Best}.
The case $i \ne j$ is easy, since $B$ is chosen randomly.
\end{proof}

\begin{lemma}                   \label{lem:maxf}
If $F = (f_{ij})$ is chosen as above, then $|f_{ij}| \le h^{1/2}$
for \hbox{$1 \le i, j \le d$}.
\end{lemma}
\begin{proof}
The matrix $Q := h^{-1/2}A^T$ is orthogonal with rows and columns of unit
length (in the Euclidean norm). Thus $||Qb||_2 = ||b||_2 = h^{1/2}$ for
each column $b$ of $B$.  Since $h^{1/2}F = C.QB$, each element
$h^{1/2}f_{ij}$ of $h^{1/2}F$
is the inner product of a row of $C$ (having length $h^{1/2}$) and a column
of $QB$ (also having length $h^{1/2}$).
It follows from the Cauchy-Schwartz
inequality that $|h^{1/2}f_{ij}| \le h^{1/2}\cdot h^{1/2} =  h$,
so $|f_{ij}| \le h^{1/2}$.
\end{proof}

\begin{lemma}           \label{lem:independence}
If $F$ is chosen as above and	% restated 20150218
$\{i,j\}\cap\{k,\ell\} = \emptyset$,
then $f_{ij}$ and $f_{k\ell}$ are independent.
\end{lemma}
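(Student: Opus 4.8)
The plan is to make explicit how each entry of $F$ depends on the random matrix $B$, and to observe that it depends on only \emph{two} of the $d$ columns of $B$. Write $b_1,\dots,b_d$ for the columns of $B$ and $a_1,\dots,a_h$ for the columns of $A$. Since $A^T = hA^{-1}$ we have $hF = CA^TB$, and $(A^TB)_{jk} = \langle a_j, b_k\rangle$. The first step is to record that the definition $c_{ij} = \sign(A^TB)_{ji} = \sign\langle a_j, b_i\rangle$ shows the entire $i$-th row of $C$ to be a function of the single column $b_i$ (the columns $a_j$ of $A$ being fixed). Substituting this into $hf_{ik} = (CA^TB)_{ik} = \sum_{j=1}^h c_{ij}\langle a_j, b_k\rangle$ yields the explicit formula
\[
f_{ik} = \frac{1}{h}\sum_{j=1}^h \sign\langle a_j, b_i\rangle\,\langle a_j, b_k\rangle,
\]
in which $b_i$ enters only through the signs and $b_k$ only through the inner products. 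Hence each entry $f_{ij}$ is a deterministic measurable function of the pair $(b_i, b_j)$ and of no other column of $B$.

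With this established, the conclusion follows from a general fact about independence. The columns $b_1,\dots,b_d$ are, by construction, mutually independent random vectors, since every entry of $B$ is an independent uniform $\pm1$. The hypothesis $\{i,j\}\cap\{k,\ell\}=\emptyset$ means that $f_{ij}$, a function of $(b_i,b_j)$, and $f_{k\ell}$, a function of $(b_k,b_\ell)$, depend on two \emph{disjoint} sub-collections of these independent columns. I would then invoke the standard principle that a measurable function of one group of independent random variables is independent of any measurable function of a disjoint group, applied to the groups $\{b_i,b_j\}$ and $\{b_k,b_\ell\}$, to deduce that $f_{ij}$ and $f_{k\ell}$ are independent.

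I do not expect a genuine obstacle here: once the formula for $f_{ik}$ is written down, the result is immediate. The only point that deserves explicit attention — and the step I would flag — is the observation that the sign pattern forming the $i$-th row of $C$ is a function of $b_i$ alone. A careless reading of the relation $C = \sign\bigl((A^TB)^T\bigr)$ might suggest a dependence on all of $B$, and it is precisely the column-wise decoupling $c_{ij}=\sign\langle a_j,b_i\rangle$ that confines $f_{ij}$ to the two columns $b_i$ and $b_j$, making the disjointness hypothesis do its work.
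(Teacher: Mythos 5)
Your proof is correct and takes essentially the same approach as the paper, whose entire proof is the one-line observation that $f_{ij}$ depends only on the fixed matrix $A$ and on columns $i$ and $j$ of $B$. Your version simply makes explicit the formula $f_{ik} = \frac{1}{h}\sum_{j=1}^h \sign\langle a_j, b_i\rangle\,\langle a_j, b_k\rangle$ and the standard fact that measurable functions of disjoint groups of independent random vectors are independent.
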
   
\begin{proof}
This follows from the fact that
$f_{ij}$ depends only on the fixed matrix $A$ and on
columns $i$ and $j$ % (and no other columns) 
of $B$.
\end{proof}

\begin{lemma}	\label{lemma:u_sum}
Let $A \in \{\pm1\}^{h\times h}$ be a Hadamard matrix,
$C \in \{\pm1\}^{d\times h}$, and $U = CA^{-1}$.  
Then, for each $i$ with $1 \le i \le d$,
\[\sum_{j=1}^h u_{ij}^2 = 1.\]
\end{lemma}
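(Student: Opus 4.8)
The plan is to recognize the sum $\sum_{j=1}^h u_{ij}^2$ as the squared Euclidean length of the $i$-th row of $U$, and then to exploit the orthogonality already observed in the proof of Lemma~\ref{lem:maxf}. First I would write $c_i \in \{\pm1\}^{1\times h}$ for the $i$-th row of $C$, so that the $i$-th row of $U = CA^{-1}$ is $c_i A^{-1}$ and
\[
\sum_{j=1}^h u_{ij}^2 = c_i A^{-1}(A^{-1})^T c_i^T .
\]

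Next I would invoke the defining property of a Hadamard matrix. Since $A$ is Hadamard of order $h$, we have $A^TA = AA^T = hI$, equivalently $A^{-1} = h^{-1}A^T$. Hence $A^{-1}(A^{-1})^T = h^{-2}A^TA = h^{-1}I$, and the expression above collapses to $h^{-1}c_i c_i^T$. Finally, because $C$ has entries in $\{\pm1\}$, each $c_{ij}^2 = 1$, so $c_i c_i^T = \sum_{j=1}^h c_{ij}^2 = h$, giving $\sum_{j=1}^h u_{ij}^2 = h^{-1}\cdot h = 1$, as required.

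There is no serious obstacle here; the only point worth stating carefully is that a Hadamard matrix satisfies $A^TA = hI$ (not merely $AA^T = hI$), which is exactly what lets us rewrite $A^{-1}(A^{-1})^T$ as $h^{-1}I$. Equivalently, one may phrase the whole argument through the orthogonal matrix $Q = h^{-1/2}A^T$ of Lemma~\ref{lem:maxf}: then $A^{-1} = h^{-1/2}Q$, so $c_i A^{-1} = h^{-1/2}c_i Q$ has squared length $h^{-1}\|c_i Q\|_2^2 = h^{-1}\|c_i\|_2^2 = h^{-1}\cdot h = 1$, since $Q$ preserves Euclidean length. I would present whichever of these two equivalent formulations reads most cleanly in context, probably the orthogonality version, as it reuses the norm-preservation fact already established for Lemma~\ref{lem:maxf}.
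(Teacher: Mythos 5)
Your proof is correct and is essentially the paper's own argument: the paper computes $UU^T = h^{-1}CC^T$ from $A^TA = hI$ and reads off the diagonal, which is exactly your row-wise calculation $c_iA^{-1}(A^{-1})^Tc_i^T = h^{-1}c_ic_i^T = 1$ stated in matrix form. The alternative phrasing via the orthogonal matrix $Q$ is an equivalent repackaging of the same fact, so there is no substantive difference in approach.
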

\begin{proof}
Since $A$ is Hadamard,
$UU^T = % CA^{-1}A^{-T}C^T = C(A^TA)^{-1}C^T = 
h^{-1}CC^T$.
Also, since $c_{ij} = \pm 1$, ${\rm diag}(CC^T) = hI$.
Thus ${\rm diag}(UU^T) = I$.
\end{proof}

\begin{lemma}		\label{lem:fij2}
If $F = (f_{ij})$ is chosen as above, then
\begin{equation} 	\label{eq:Efij2}
\E[f_{ij}^2] = 1 \text{ for } \; i \ne j.
\end{equation}
\end{lemma}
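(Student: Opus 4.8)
The plan is to write $F = UB$, where $U = CA^{-1}$ is exactly the matrix appearing in Lemma~\ref{lemma:u_sum}, so that the $(i,j)$ entry becomes $f_{ij} = \sum_{k=1}^h u_{ik}b_{kj}$, with $b_{kj}$ denoting the entries of the $j$-th column $b_j$ of $B$. The crucial structural observation is the one already recorded before Lemma~\ref{lem:independence}: the $i$-th row of $C$, and hence the $i$-th row $(u_{i1},\dots,u_{ih})$ of $U$, is a function of the $i$-th column $b_i$ of $B$ alone, because $c_{ik} = \sign\bigl((A^TB)_{ki}\bigr)$ depends only on $A^Tb_i$. Consequently, when $i \ne j$, the row $(u_{ik})_k$ is independent of the column $(b_{kj})_k$.

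First I would condition on $b_i$, which fixes the entire $i$-th row of $U$ while leaving $b_j$ a fresh vector of independent $\pm 1$ signs. Expanding the square gives $f_{ij}^2 = \sum_{k,\ell} u_{ik}u_{i\ell}\,b_{kj}b_{\ell j}$, and since the entries of column $j$ are independent with mean zero, $\E[b_{kj}b_{\ell j}] = \delta_{k\ell}$. Taking the conditional expectation therefore collapses the double sum to its diagonal, yielding $\E[f_{ij}^2 \mid b_i] = \sum_{k=1}^h u_{ik}^2$.

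Finally I would invoke Lemma~\ref{lemma:u_sum}, which states that $\sum_{k=1}^h u_{ik}^2 = 1$ for each $i$, regardless of the realisation of $b_i$. Since the conditional expectation equals $1$ for every value of the conditioning variable, the tower property (taking the total expectation) immediately gives $\E[f_{ij}^2] = 1$ for $i \ne j$, as required.

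The only genuine subtlety is the dependence of $C$, and hence $U$, on the random matrix $B$: one must verify that conditioning on column $i$ of $B$ simultaneously fixes row $i$ of $U$ and leaves column $j$ untouched as a vector of independent signs. This is precisely what the column-wise dependence structure guarantees, so once the conditioning is correctly arranged there is no real obstacle, and the remaining computation is routine.
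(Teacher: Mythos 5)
Your proposal is correct and follows essentially the same route as the paper's proof: both expand $f_{ij} = \sum_k u_{ik}b_{kj}$ from $F = UB$, exploit the fact that row $i$ of $U$ depends only on column $i$ of $B$ so that conditioning on that column leaves column $j$ as independent mean-zero signs, collapse $\E[f_{ij}^2]$ to $\sum_k u_{ik}^2$, and finish with Lemma~\ref{lemma:u_sum}. The only cosmetic difference is that the paper fixes $i=1$ without loss of generality and phrases the conditioning informally (``fix the first column of $B$ and take expectations over all choices of the other columns''), whereas you make the tower-property step explicit.
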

\begin{proof} 
We can  assume, without loss of generality, that 
$i=1$, $j > 1$.  
Write $F = UB$, where $U = CA^{-1} = h^{-1}CA^T$.
Now 
\begin{equation}		\label{eq:sum_indep_vars}
f_{1j} = \sum_k u_{1k}b_{kj},
\end{equation}
where
\[u_{1k} = \frac{1}{h}\sum_{\ell} c_{1\ell}a_{k\ell},\;\;
  c_{1\ell} = {\rm sgn}\left(\sum_m b_{m1}a_{m\ell}\right).\]
Observe that $c_{1\ell}$ and $u_{1k}$ depend only on the first column of $B$.
Thus, $f_{1j}$ depends only on the first and $j$-th columns of $B$.
If we fix the first column of $B$ and take expectations over all choices
of the other columns, we obtain
\[\E[f_{1j}^2] = \E\left[\sum_k\sum_{\ell}u_{1k}u_{1\ell}b_{kj}b_{\ell j}
  \right].\]
The expectation of the terms with $k\ne \ell$ vanishes,
and the expectation of the terms with $k=\ell$
is $\sum_k u_{1k}^2$.
Thus, \eqref{eq:Efij2} follows from Lemma~\ref{lemma:u_sum}.
\end{proof}

\begin{lemma}	\label{lemma:variance}
Let $A$ be a Hadamard matrix of order $h \ge 4$ and $B$, $C$ be
$\{\pm1\}$-matrices chosen as above.  Let 
$G = F+I$ where $F = CA^{-1}B$.
Then 
\begin{eqnarray}
\E[g_{ii}] &=& 1 + \frac{h}{2^h}\binom{h}{h/2}\,,\label{eq:mu1}\\
\E[g_{ij}] &=& 0 \text{ for } 1 \le i, j \le d,\; i \ne j,\label{eq:mu2}\\
\V[g_{ii}] &=&
  1 + \frac{h(h-1)}{2^{h+1}}\binom{h/2}{h/4}^2 
   - \; \frac{h^2}{2^{2h}}\binom{h}{h/2}^2,\label{eq:sigma2}\\
\V[g_{ij}] &=& 1 \text{ for } 1 \le i, j \le d,\; i \ne j.\label{eq:sigmaij}
\end{eqnarray}
\end{lemma}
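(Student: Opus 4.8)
The plan is to reduce every assertion to a statement about $F$, since $G=F+I$ gives $g_{ii}=f_{ii}+1$ and $g_{ij}=f_{ij}$ for $i\ne j$, whence $\E[g_{ii}]=1+\E[f_{ii}]$, $\E[g_{ij}]=\E[f_{ij}]$, $\V[g_{ii}]=\V[f_{ii}]$ and $\V[g_{ij}]=\V[f_{ij}]$. Equations~\eqref{eq:mu1} and~\eqref{eq:mu2} then follow immediately from Lemma~\ref{lem:Efij}, and \eqref{eq:sigmaij} follows from $\V[f_{ij}]=\E[f_{ij}^2]-\E[f_{ij}]^2=1-0$ via Lemmas~\ref{lem:fij2} and~\ref{lem:Efij}. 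For \eqref{eq:sigma2} I would write $\V[g_{ii}]=\V[f_{ii}]=\E[f_{ii}^2]-\E[f_{ii}]^2$; by Lemma~\ref{lem:Efij} the subtracted square is exactly $h^2 2^{-2h}\binom{h}{h/2}^2$, which is the final term of \eqref{eq:sigma2}. So the whole lemma collapses to computing the single second moment $\E[f_{ii}^2]$ and checking that it equals $1+\frac{h(h-1)}{2^{h+1}}\binom{h/2}{h/4}^2$.

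To get at $\E[f_{ii}^2]$ I would fix the column index $i$, abbreviate $b_m=b_{mi}$, and set $s_\ell=(A^TB)_{\ell i}=\sum_m a_{m\ell}b_m$. Since $c_{i\ell}=\sign(s_\ell)$ and $hF=CA^TB$, this yields the key identity $h f_{ii}=\sum_\ell c_{i\ell}s_\ell=\sum_\ell |s_\ell|$. Squaring, $(hf_{ii})^2=\sum_\ell s_\ell^2+\sum_{\ell\ne\ell'}|s_\ell|\,|s_{\ell'}|$, and the first sum is \emph{deterministic}: $\sum_\ell s_\ell^2=\|A^Tb\|_2^2=b^T(AA^T)b=h\,b^Tb=h^2$, because $A$ is Hadamard and $b\in\{\pm1\}^h$. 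Hence $\E[(hf_{ii})^2]=h^2+h(h-1)\,\E[|s_1|\,|s_2|]$, where I have anticipated that, by the symmetry established next, every off-diagonal pair contributes the same amount.

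The remaining task, which is the real content of the lemma, is to evaluate $\E[|s_1|\,|s_2|]$. Here I would exploit the orthogonality of columns $1$ and $2$ of $A$: writing $\epsilon_m=a_{m1}a_{m2}\in\{\pm1\}$, orthogonality forces $\sum_m\epsilon_m=0$, so the index set splits into halves $M_\pm=\{m:\epsilon_m=\pm1\}$ of size $h/2$ each (this is where $4\mid h$ enters). Putting $X=\sum_{m\in M_+}a_{m1}b_m$ and $Y=\sum_{m\in M_-}a_{m1}b_m$, one checks $s_1=X+Y$ and $s_2=X-Y$, with $X$ and $Y$ independent, each a sum of $h/2$ independent uniform $\pm1$ variables; thus $\E[|s_1|\,|s_2|]=\E[|X^2-Y^2|]$, a quantity depending only on $h$, which justifies the symmetry used above. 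I expect evaluating $\E[|X^2-Y^2|]$ to be the main obstacle: conditioning on the number $k$ of sign agreements between $X$ and $Y$ makes $X+Y$ and $X-Y$ conditionally independent scaled random walks, reducing the expectation to a convolution $\sum_k\binom{h/2}{k}e_k\,e_{h/2-k}$ of mean absolute deviations $e_m=\E\bigl|\sum_{j=1}^{m}\xi_j\bigr|$ (with $\xi_j$ independent uniform $\pm1$). Establishing that $\E[|X^2-Y^2|]$ thereby equals $\frac{h^2}{2^{h+1}}\binom{h/2}{h/4}^2$ is a binomial-coefficient identity, which I would either prove by generating functions for $e_m$ or obtain from Best's computation~\cite{Best}; substituting this value back into $\E[(hf_{ii})^2]=h^2+h(h-1)\E[|s_1|\,|s_2|]$ and dividing by $h^2$ then gives $\E[f_{ii}^2]=1+\frac{h(h-1)}{2^{h+1}}\binom{h/2}{h/4}^2$, completing \eqref{eq:sigma2}.
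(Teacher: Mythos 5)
Your proposal is correct and is essentially the paper's own argument. The reduction to $\E[f_{ii}^2]$, the identity $hf_{ii}=\sum_\ell|s_\ell|$, the expansion of the square, and the use of orthogonality of two columns of $A$ to write $s_1=X+Y$, $s_2=X-Y$ with $X,Y$ independent sums of $h/2$ uniform signs are exactly the paper's steps: its random variables $S_1=2(p+q)$ and $S_2=2(p-q)$ are your $X+Y$ and $X-Y$, and its $\E[|S_1S_2|]$ is your $\E[|X^2-Y^2|]$. Two points of comparison. First, your observation that $\sum_\ell s_\ell^2=b^T(AA^T)b=h^2$ holds \emph{deterministically} is a slight streamlining (the paper instead uses $\E[S_1^2]=h$), and your explicit $\epsilon_m=a_{m1}a_{m2}$ argument carefully justifies the distributional claim that the paper only asserts in words (``each product $Y_jY_\ell$ has the same distribution as $|S_1S_2|$''); note, though, that the split into halves $M_\pm$ needs only $2\mid h$ — the condition $4\mid h$ enters later, so that $\binom{h/2}{h/4}$ makes sense. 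Second, the one step you leave open, the evaluation $\E[|X^2-Y^2|]=\frac{h^2}{2^{h+1}}\binom{h/2}{h/4}^2$, is also not proved from scratch in the paper: it is the double-sum identity $\sum_p\sum_q\binom{2k}{k+p}\binom{2k}{k+q}|p^2-q^2|=2k^2\binom{2k}{k}^2$ (with $k=h/4$), which the paper cites from \cite[Prop.~1.1]{rpb263}. Be aware that your fallback reference is unlikely to work: Best~\cite{Best} computes only the first moment (the expected excess), not this mixed absolute second moment, so you would need to carry out the generating-function/convolution computation you sketch, or cite \cite{rpb263} as the paper does.
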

\begin{proof}
Since $G = F+I$,
the results~\eqref{eq:mu1}, \eqref{eq:mu2} and \eqref{eq:sigmaij}
follow from
Lemma~\ref{lem:Efij} and Lemma~\ref{lem:fij2} above.
Thus, we only need to prove~\eqref{eq:sigma2}.
Since $g_{ii} = f_{ii}+1$, 
it is sufficient to compute $\V[f_{ii}]$.

Since $A$ is a Hadamard matrix, $hF = CA^{T}B$. We compute the
second moment about the origin of the diagonal elements $hf_{ii}$
of $hF$.  Since $h$ is a Hadamard order and $h \ge 4$, we can write
$h = 4k$ where $k \in \Z$.  
Consider $h$ independent random variables $X_j \in \{\pm1\}$, $1 \le j \le h$,
where $X_j = +1$ with probability $1/2$.  Define random variables $S_1$,
$S_2$ by
\[S_1 = \sum_{j=1}^{4k} X_j, \;\;
  S_2 = \sum_{j=1}^{2k} X_j \; - \!\!\sum_{j=2k+1}^{4k} X_j\,.\]

Consider a particular choice of $X_1, \ldots, X_h$ and suppose that
$k+p$ of $X_1, \ldots, X_{2k}$ are $+1$,
and that $k+q$ of $X_{2k+1},\ldots,X_{4k}$ are $+1$.
Then we have $S_1 = 2(p+q)$ and $S_2 = 2(p-q)$.
Thus, taking expectations over all $2^{4k}$ possible (equally likely)
choices,
we see that
\begin{align*}
\E[|S_1 S_2|] = 4\E[|p^2-q^2|]
 &= \frac{4}{2^{4k}}\sum_p \sum_q \binom{2k}{k+p}\binom{2k}{k+q}|p^2-q^2|\\
 &= \frac{4}{2^{4k}} \cdot 2k^2\binom{2k}{k}^2
 = \frac{h^2}{2^{h+1}}\binom{2k}{k}^2\,.
\end{align*}
Here the closed form for the double sum is a special case % ($r=2$) 
of~\cite[Prop.~1.1]{rpb263}.
By the definitions of $B$, $C$ and $F$, we see that
$hf_{ii}$ is a sum of the form \hbox{$Y_1 + Y_2 + \cdots + Y_h$},
where each $Y_j$ is a random variable with the same distribution as $|S_1|$,
and each product $Y_j Y_\ell$ (for $j \ne \ell$) has the 
same distribution as $|S_1 S_2|$.
Also, $Y_j^2$ has the
same distribution as $|S_1|^2 = S_1^2$.  
The random variables $Y_j$ are not independent,
but by linearity of expectations we obtain
\[h^2\E[f_{ii}^2] = h\E[S_1^2] + h(h-1)\E[|S_1S_2|]
 = h^2 + h(h-1)\cdot \frac{h^2}{2^{h+1}}\binom{2k}{k}^2\,.\]
This gives
\[\E[f_{ii}^2] = 1 + \frac{h(h-1)}{2^{h+1}}\binom{2k}{k}^2\,.\]
The result for $\V[g_{ii}]$ now follows from
$\V[g_{ii}] = \V[f_{ii}] = \E[f_{ii}^2] - \E[f_{ii}]^2$.
\end{proof}

For convenience we write $\mu(h) := \E[g_{ii}] = \E[f_{ii}]+1$ 
and $\sigma(h)^2 := \V[g_{ii}]$.
If $h$ is understood from the context we write simply $\mu$
and $\sigma^2$ respectively.

To estimate $\mu$ and $\sigma^2$ from Lemma~\ref{lemma:variance},
we need a sufficiently
accurate estimate for a central binomial coefficient
$\binom{2m}{m}$ (where $m = h/2$ or $h/4$).
An asymptotic expansion 
for $\ln\binom{2m}{m}$ may be deduced from
Stirling's asymptotic expansion of $\ln\Gamma(z)$, as in~\cite{KS}. However,
\cite{KS} does not give an error bound.
We state such a bound in the following Lemma.

\begin{lemma}				\label{lemma:central_binomial_approx}
If $k$ and $m$ are positive integers, then
\begin{equation}		\label{eq:ln_binom_central}
\ln\binom{2m}{m} = m\ln 4 - \frac{\ln(\pi m)}{2}
        - \sum_{j=1}^{k-1} \frac{B_{2j}(1-4^{-j})}{j(2j-1)}\,m^{1-2j}
	+ e_k(m),
\end{equation}
where 
\begin{equation}		\label{eq:ln_binom_error}
|e_k(m)| < \frac{|B_{2k}|}{k(2k-1)}\,m^{1-2k}.
\end{equation}
\end{lemma}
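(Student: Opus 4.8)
The plan is to derive the expansion from the classical sharpened form of Stirling's series for $\ln\Gamma$, in which, for a real positive argument, the remainder after any number of terms is bounded in absolute value by the first omitted term and has the same sign as that term. Writing $\binom{2m}{m} = (2m)!/(m!)^2$, I would start from
\[
\ln(n!) = \left(n+\tfrac12\right)\ln n - n + \tfrac12\ln(2\pi) + \mu(n),
\]
where $\mu(n) = \sum_{j=1}^{k-1}\frac{B_{2j}}{2j(2j-1)}\,n^{1-2j} + R_k(n)$ is the Binet function and $R_k(n)$ is the remainder just described. Setting $T_k(n) := \frac{B_{2k}}{2k(2k-1)}\,n^{1-2k}$ for the first omitted term, this property says $R_k(n) = \theta_n\,T_k(n)$ with $0 < \theta_n < 1$.

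Next I would form $\ln\binom{2m}{m} = \ln((2m)!) - 2\ln(m!)$ and collect terms. The elementary (non-Binet) part contributes $(2m+\tfrac12)\ln(2m) - (2m+1)\ln m - \tfrac12\ln(2\pi)$; splitting $\ln(2m) = \ln 2 + \ln m$, the $\ln m$ terms collapse to $-\tfrac12\ln m$, the $\ln 2$ terms give $m\ln 4 + \tfrac12\ln 2$, and the constants combine with $-\tfrac12\ln(2\pi)$ to $-\tfrac12\ln\pi$; together these are exactly $m\ln 4 - \tfrac12\ln(\pi m)$, matching the leading terms of \eqref{eq:ln_binom_central}.

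For the series part, the $j$-th Binet term contributes $\frac{B_{2j}}{2j(2j-1)}\bigl[(2m)^{1-2j} - 2m^{1-2j}\bigr] = \frac{B_{2j}}{2j(2j-1)}\bigl(2^{1-2j}-2\bigr)m^{1-2j}$. A one-line check of the identity $2^{1-2j} - 2 = -2(1 - 4^{-j})$ shows this equals $-\frac{B_{2j}(1-4^{-j})}{j(2j-1)}m^{1-2j}$, which is precisely the summand in \eqref{eq:ln_binom_central}. Thus the explicit sum is accounted for exactly, and the entire error is $e_k(m) = R_k(2m) - 2R_k(m)$.

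The one step requiring care, and the main obstacle, is the error bound \eqref{eq:ln_binom_error}. Using $T_k(2m) = 2^{1-2k}T_k(m)$ and $R_k(z) = \theta_z T_k(z)$ I would write $e_k(m) = T_k(m)\bigl(\theta_{2m}2^{1-2k} - 2\theta_m\bigr)$. A crude triangle inequality overshoots, yielding the factor $2 + 2^{1-2k}$ rather than $2$, so instead I would exploit that both $\theta$'s are positive and that the two contributions enter with opposite signs: since $0 < \theta_{2m}2^{1-2k} < 2^{1-2k} \le \tfrac12$ and $0 < 2\theta_m < 2$, the bracket lies strictly in $(-2,\tfrac12)$, whence $\bigl|\theta_{2m}2^{1-2k} - 2\theta_m\bigr| < 2$. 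Since $|T_k(m)| = \frac{|B_{2k}|}{2k(2k-1)}m^{1-2k}$, this gives $|e_k(m)| < \frac{|B_{2k}|}{k(2k-1)}m^{1-2k}$, as required. The crucial inputs are the known sign of the Stirling remainder together with the bound $2^{1-2k}\le \tfrac12$ for $k\ge1$, which produce the needed cancellation between $R_k(2m)$ and $2R_k(m)$.
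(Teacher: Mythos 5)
Your proof is correct and follows essentially the same route as the paper: both start from Stirling's series with the remainder known to be positive-signed relative to the first omitted term (the paper cites Olver's $\ln\Gamma$ form, you use the equivalent Binet-function form for $\ln(n!)$), combine the expansions at $m$ and $2m$, and exploit the signs of the two remainders—rather than a crude triangle inequality—to get the factor $2$ instead of $2+2^{1-2k}$. Your bracket bound $\theta_{2m}2^{1-2k}-2\theta_m \in (-2,\tfrac12)$ is exactly the paper's conclusion that $e_k(m)=(-1)^k\bigl(2r_k(m)-r_k(2m)\bigr)$ corresponds to a factor $\theta$ with $-2^{-2k}<\theta<1$.
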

\begin{proof}
Using the facts that $m$ is real and positive, and that the sign
of the Bernoulli number $B_{2k}$ is $(-1)^{k-1}$, we obtain 
from Olver~\cite[(4.03) and (4.05) of Ch.\ 8]{Olver74} that
\begin{equation}		\label{eq:Stirling_ln_Gamma}
\ln\Gamma(m) = \left(m-\scriptstyle\frac{1}{2}\right)\ln m - m
	+ \frac{\ln(2\pi)}{2}
	+ \sum_{j=1}^{k-1}\frac{B_{2j}}{2j(2j-1)}m^{1-2j}
	- (-1)^k r_k(m),
\end{equation}
where 
\begin{equation}		\label{eq:lnGamma_error_bd}
0 < r_k(m) < \frac{|B_{2k}|}{2k(2k-1)}m^{1-2k}.
\end{equation}
Now
\[\binom{2m}{m} = \frac{(2m)!}{m!m!}
	= \frac{2}{m}\frac{\Gamma(2m)}{\Gamma(m)^2}\,\raisecomma\] 
so from~\eqref{eq:Stirling_ln_Gamma} and the same equation with $m \mapsto 2m$
we obtain
\eqref{eq:ln_binom_central} with
\[
e_k(m) = (-1)^{k}(2r_k(m) - r_k(2m)). 
\]
Using the bound~\eqref{eq:lnGamma_error_bd}, this gives
\[
e_k(m) = \frac{(-1)^k|B_{2k}|}{k(2k-1)}\,m^{1-2k}\theta,
\]
where $-2^{-2k} < \theta < 1$.
In particular, $|\theta| < 1$, so we obtain the desired
bound~\eqref{eq:ln_binom_error}. 
\end{proof}

\begin{remark} % This remark added 20160818 so not in the AJC version
{\rm
Lemma~\ref{lemma:central_binomial_approx} can be sharpened.  In fact,
$e_k(m)$ has the same sign as the first omitted term (corresponding
to $j=k$) and has smaller magnitude.  This is
proved in~\cite[Corollary~2]{rpb267}.
}
\end{remark}

We now show that $\mu(h)$ is of order $h^{1/2}$, and that $\sigma(h)$
is bounded.

\pagebreak[3]

\begin{lemma}	\label{lemma:sigma_asymptotics}
For $h \in 4\Z$, $h \ge 4$,
we have
\begin{equation}	\label{eq:sigma_weak_upper_bound}
\sigma(h)^2 < 1
\end{equation}
and
\begin{equation}	\label{eq:ineq_mu1}
\sqrt{\frac{2h}{\pi}} + 0.9 < \mu(h) < \sqrt{\frac{2h}{\pi}} + 1.
\end{equation}
\end{lemma}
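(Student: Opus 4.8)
The plan is to feed the exact expressions for $\mu(h)$ and $\sigma(h)^2$ from Lemma~\ref{lemma:variance} into the central binomial estimate of Lemma~\ref{lemma:central_binomial_approx}, taking $k=1$ so that the expansion reads $\binom{2M}{M} = 4^M(\pi M)^{-1/2}\exp(e_1(M))$. I would use both the sign and the sharpened magnitude recorded in the Remark following Lemma~\ref{lemma:central_binomial_approx}, namely $e_1(M)<0$ and $|e_1(M)|<\tfrac{1}{8M}$, rather than the cruder $|e_1(M)|<\tfrac{1}{6M}$; the crude bound turns out to be too weak for either half of the claim. Writing $m=h/2$ and $\ell=h/4$, a one-line simplification using $4^{m}=2^{h}$ produces the identity
\[
\mu(h) - 1 = \frac{h}{2^h}\binom{h}{h/2} = \sqrt{\frac{2h}{\pi}}\,\exp(e_1(m)),
\]
together with the two nontrivial pieces of $\sigma(h)^2$ in \eqref{eq:sigma2},
\[
\frac{h^2}{2^{2h}}\binom{h}{h/2}^2 = \frac{2h}{\pi}\exp(2e_1(m)),
\qquad
\frac{h(h-1)}{2^{h+1}}\binom{h/2}{h/4}^2 = \frac{2(h-1)}{\pi}\exp(2e_1(\ell)).
\]

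For the bounds on $\mu$ I would argue directly from the first identity. The upper bound $\mu(h)<\sqrt{2h/\pi}+1$ is immediate from $e_1(m)<0$, which forces $\exp(e_1(m))<1$. For the lower bound, the elementary inequality $1-e^{x}<-x$ (valid for $x<0$) gives $1-\exp(e_1(m)) < |e_1(m)| < \tfrac{1}{8m} = \tfrac{1}{4h}$, so the deficit below $\sqrt{2h/\pi}+1$ is at most $\sqrt{2h/\pi}\cdot\tfrac{1}{4h} = \tfrac14\sqrt{2/(\pi h)}$. This quantity is decreasing in $h$ and at $h=4$ already equals $(4\sqrt{2\pi})^{-1}<0.1$, so $\mu(h)>\sqrt{2h/\pi}+0.9$ for every admissible $h$.

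For $\sigma(h)^2<1$ I would substitute the two pieces above into \eqref{eq:sigma2} and cancel the leading $1$, reducing the claim to the single inequality $(h-1)\exp(2e_1(\ell)) < h\exp(2e_1(m))$, that is, $1-\tfrac1h < \exp\!\bigl(2(e_1(m)-e_1(\ell))\bigr)$. Since $m=2\ell$, the sign $e_1(\ell)<0$ together with $|e_1(m)|<\tfrac{1}{8m}=\tfrac{1}{4h}$ yields $e_1(m)-e_1(\ell) > e_1(m) > -\tfrac{1}{4h}$; hence the right-hand side exceeds $e^{-1/(2h)} > 1-\tfrac{1}{2h} > 1-\tfrac1h$, which is exactly what is required.

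The main obstacle is precision rather than ideas: both halves are genuinely tight (the $\mu$ lower bound is essentially sharp at $h=4$, and the $\sigma^2$ inequality lives on the factor $(h-1)/h$ rather than $1$), so the argument only closes with refined control of $e_1$. In particular the \emph{sign} of $e_1$ and the constant $\tfrac18$ in its magnitude bound are both essential, and the weaker constant $\tfrac16$ available from Lemma~\ref{lemma:central_binomial_approx} alone would not suffice for either statement.
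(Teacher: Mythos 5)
Your proof is correct, and it follows the same overall strategy as the paper's: feed the exact expressions for $\mu(h)$ and $\sigma(h)^2$ from Lemma~\ref{lemma:variance} into the central binomial estimate of Lemma~\ref{lemma:central_binomial_approx}. The difference lies in how the error term is controlled, and it is a genuine difference with real trade-offs. The paper takes $k=2$ in Lemma~\ref{lemma:central_binomial_approx}, so that the $-\tfrac{1}{8m}$ term appears explicitly and the residual error is below $\tfrac{1}{180m^3}$; this keeps the argument self-contained (resting only on the Olver bounds used to prove that lemma), but the resulting estimate $|e_1(m)|<\tfrac{1}{8m}+\tfrac{1}{180m^3}$ is just barely too weak at $h=4$ for the lower bound on $\mu$ (it gives a deficit bound of about $0.1008>0.1$), which is exactly why the paper verifies the case $m=2$ by direct computation before assuming $m\ge 4$. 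You instead take $k=1$ and invoke the sharpened statement in the Remark following Lemma~\ref{lemma:central_binomial_approx}, namely $e_1<0$ and $|e_1(M)|<\tfrac{1}{8M}$, which the paper states but proves only by citation to \cite{rpb267}. This buys you uniformity (no special case at $h=4$) and a cleaner finish for the variance: your reduction of $\sigma(h)^2<1$ to $(h-1)\exp\bigl(2e_1(h/4)\bigr)<h\exp\bigl(2e_1(h/2)\bigr)$ is more transparent than the paper's observation that a bracketed difference of exponentials is negative. The cost is that your argument imports an external result that the paper's own proof deliberately avoids. If you wanted to stay entirely within the paper's toolkit, note that $k=2$ already gives both the sign of $e_1$ and the bound $|e_1(m)|<\tfrac{1}{8m}+\tfrac{1}{180m^3}$, which suffices for your entire argument except the $\mu$ lower bound at $h=4$, recoverable by the one-line check $\mu(4)=\tfrac52>\sqrt{8/\pi}+0.9$. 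One small quibble: your closing claim that both halves are ``genuinely tight'' overstates the case for the variance, since in fact $\sigma(h)^2=\tfrac14$ at $h=4$ and tends to $1-3/\pi\approx 0.045$; what is true is that the one-sided error control must be sharp for your particular reduction to close, while the $\mu$ lower bound really is nearly sharp at $h=4$ (margin about $0.004$).
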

\begin{proof}
{From} Lemma~\ref{lemma:central_binomial_approx} with $k=2$ and $m$ a
positive integer, we have
\begin{equation}				\label{eq:centralk2}
\binom{2m}{m} = \frac{4^m}{\sqrt{\pi m}}
		   \exp\left[-\frac{1}{8m} + \frac{\theta_m}{180m^3}\right]
		   \,,
\end{equation}
where $|\theta_m| < 1$.

First consider the bounds~\eqref{eq:centralk2} on $\mu(h)$.
Taking $m=h/2$ and using the expression~\eqref{eq:mu1} for $\mu(h)$,
the inequality~\eqref{eq:ineq_mu1} is equivalent to
\[
\sqrt{\frac{m}{\pi}} - \frac{1}{20} < \frac{m}{4^m}\binom{2m}{m} <
\sqrt{\frac{m}{\pi}}\,\raisedot
\]
The upper bound is immediate from~\eqref{eq:centralk2},
since $-\frac{1}{8m} + \frac{1}{180m^3} < 0$.

For the lower bound, a computation verifies the inequality
for $m=2$, since $\sqrt{{2}/{\pi}}-\frac{1}{20} <
\frac{3}{4} = \frac{m}{4^m}\binom{2m}{m}$. 
Hence, we can assume that $m \ge 4$.  The lower bound now follows
from~\eqref{eq:centralk2}, since
\[
\frac{m}{4^m}\binom{2m}{m} > \sqrt{\frac{m}{\pi}}\,
\exp\left[-\frac{1}{8m} - \frac{1}{180m^3}\right]
> \sqrt{\frac{m}{\pi}}\,\left[1-\frac{1}{8m} - \frac{1}{180m^3}\right]
\]
and 
\[
\sqrt{\frac{m}{\pi}}\left[\frac{1}{8m} + \frac{1}{180m^3}\right] <
\frac{1}{20}\,\raisedot
\]

Now consider the upper bound~\eqref{eq:sigma_weak_upper_bound}
on $\sigma(h)^2$.  From~\eqref{eq:centralk2} we have
\[\binom{h/2}{h/4}^2 < \frac{2^{h+2}}{\pi h}
			\exp\left[-\frac{1}{h}+\frac{32}{45h^3}\right]\]
and
\[\binom{h}{h/2}^2 > \frac{2^{2h+1}}{\pi h}
                        \exp\left[-\frac{1}{2h}-\frac{4}{45h^3}\right]\,.\]
Using these inequalities in~\eqref{eq:sigma2}
and simplifying gives
\begin{align}		\label{eq:sigma_upper_bound}
\sigma(h)^2 < 1 &+ \frac{2h}{\pi}\left[
	\exp\left(-\frac{1}{h}+\frac{32}{45h^3}\right) -
   		  \exp\left(-\frac{1}{2h}-\frac{4}{45h^3}\right)\right]
		\nonumber\\
	&- \frac{2}{\pi}\exp\left(-\frac{1}{h}+\frac{32}{45h^3}\right)\,.
\end{align}
It is easy to see that the term in square brackets is negative for $h \ge
4$, so~\eqref{eq:sigma_upper_bound}
implies~\eqref{eq:sigma_weak_upper_bound}.
\end{proof}
\begin{remark}		\label{remark:sharper_sigma_bounds}
We can show from~\eqref{eq:sigma_upper_bound} and
a corresponding lower bound on $\sigma(h)^2$
that $\sigma(h+4)^2 < \sigma(h)^2$,
so $\sigma(h)^2$ is monotonic decreasing and bounded above
by $\sigma(4)^2 = \frac{1}{4}$.
Also, for large~$h$ we have
$\sigma(h)^2 = (1 - 3/\pi) + O(1/h)$.  Since these results are not
needed below, we omit the details.
\end{remark}

\section{A probabilistic lower bound} \label{sec:bounds}

We now prove
lower bounds on $\Dbar(n)$ and $\Rbar(n)$
where, as usual, $n = h+d$ and $h$ is the order of a Hadamard matrix.
The key result is Lemma~\ref{lemma:lower_bd_via_Chebyshev}.
Theorem~\ref{thm:lower_bd_via_Chebyshev} simply converts the result
of Lemma~\ref{lemma:lower_bd_via_Chebyshev} into
lower bounds on $\Dbar(n)$ and $\Rbar(n)$, giving away a little
for the sake of simplicity in the latter case.

For the proof
of Lemma~\ref{lemma:lower_bd_via_Chebyshev} we need the following
bound on the determinant of a matrix which is ``close'' to
the identity matrix.
It is due to Ostrowski~\cite[eqn.~(5,5)]{Ostrowski38};
see also~\cite[Corollary~1]{rpb258}. % Was Corollary 2, fixed 20161025 RPB

\begin{lemma}[Ostrowski]	\label{lemma:optimal_pert_bound}
If $M = I - E \in \R^{d\times d}$, 
$|e_{ij}| \le \ve$ for $1 \le i, j \le d$,
and $d\ve \le 1$, then 
\[\det(M) \ge 1 - d\ve.\]
\end{lemma}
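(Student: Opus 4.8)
The plan is to induct on the order $d$. The base case $d=1$ is immediate: then $M = [\,1 - e_{11}\,]$, so $\det(M) = 1 - e_{11} \ge 1 - \varepsilon = 1 - d\varepsilon$. For the inductive step I would peel off the first row and column, writing
\[
M = \begin{pmatrix} 1 - e_{11} & v^T \\ w & M' \end{pmatrix},
\]
where $M' = I - E' \in \R^{(d-1)\times(d-1)}$ is again of the required form (its entries still satisfy $|e'_{ij}| \le \varepsilon$), while $v^T = (-e_{12}, \ldots, -e_{1d})$ and $w = (-e_{21}, \ldots, -e_{d1})^T$. Since $\|E'\|_\infty \le (d-1)\varepsilon < 1$ (using $d\varepsilon \le 1$), the block $M'$ is invertible, and the Schur complement lemma~\eqref{eq:SCL}, applied with respect to the block $M'$, gives $\det(M) = \det(M')\,\bigl(1 - e_{11} - v^T (M')^{-1} w\bigr)$.

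The key step is to control the scalar correction $s := 1 - e_{11} - v^T(M')^{-1}w$. By the inductive hypothesis $\det(M') \ge 1 - (d-1)\varepsilon > 0$. To bound $v^T(M')^{-1}w$ I would use the induced $\infty$-norm: one has $\|v\|_1 \le (d-1)\varepsilon$, $\|w\|_\infty \le \varepsilon$, and the Neumann series gives $\|(M')^{-1}\|_\infty \le (1 - \|E'\|_\infty)^{-1} \le (1 - (d-1)\varepsilon)^{-1}$. H\"older's inequality then yields $|v^T(M')^{-1}w| \le \|v\|_1\,\|(M')^{-1}\|_\infty\,\|w\|_\infty \le (d-1)\varepsilon^2/(1-(d-1)\varepsilon)$, whence
\[
s \ge 1 - \varepsilon - \frac{(d-1)\varepsilon^2}{1-(d-1)\varepsilon} = \frac{1 - d\varepsilon}{1 - (d-1)\varepsilon} \ge 0.
\]

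Finally, since both $\det(M')$ and $s$ are nonnegative and bounded below by the nonnegative quantities just derived, I can multiply the two bounds to obtain $\det(M) = \det(M')\,s \ge (1-(d-1)\varepsilon)\cdot (1-d\varepsilon)/(1-(d-1)\varepsilon) = 1 - d\varepsilon$, completing the induction. I expect the main obstacle to be the estimate on $v^T(M')^{-1}w$: one must pick matching norms ($\ell_1$ for $v$, $\ell_\infty$ for $w$, the induced $\infty$-norm for $(M')^{-1}$) so that the crude bound is tight enough, and the small miracle making it succeed is that the factor $1 - (d-1)\varepsilon$ cancels exactly, leaving precisely $1 - d\varepsilon$ rather than a weaker bound with lower-order corrections. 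An alternative route --- diagonalizing, pairing complex-conjugate eigenvalues to reduce to $\prod_i(1-\operatorname{Re}\lambda_i)$, and invoking the trace and Schur/Frobenius bounds $\sum_i\operatorname{Re}\lambda_i \le d\varepsilon$ and $\sum_i|\lambda_i|^2 \le d^2\varepsilon^2$ --- also works, but it leaves a delicate constrained real-variable inequality (the naive Weierstrass bound fails for mixed-sign terms), so I would prefer the self-contained induction above.
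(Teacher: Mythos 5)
Your proof is correct, but it takes a genuinely different route from the paper, which in fact offers no proof of this lemma at all: it simply attributes the inequality to Ostrowski~\cite[eqn.~(5,5)]{Ostrowski38} and cites the companion paper~\cite[Corollary~1]{rpb258}. Your induction is sound at every step: the trailing block $E'$ inherits the entrywise bound, so $\|E'\|_\infty \le (d-1)\varepsilon < 1$ and $M'$ is invertible by the Neumann series; the factorization $\det(M)=\det(M')\bigl(1-e_{11}-v^T(M')^{-1}w\bigr)$ is the scalar-Schur-complement case of~\eqref{eq:SCL}; the H\"older estimate with the norm triple $(\ell_1,\ \text{induced }\infty,\ \ell_\infty)$ gives $|v^T(M')^{-1}w|\le (d-1)\varepsilon^2/(1-(d-1)\varepsilon)$; the algebra $(1-\varepsilon)\bigl(1-(d-1)\varepsilon\bigr)-(d-1)\varepsilon^2 = 1-d\varepsilon$ checks out, so the factor $1-(d-1)\varepsilon$ cancels exactly; and multiplying the two lower bounds is legitimate since $\det(M')\ge 1-(d-1)\varepsilon>0$ and the Schur complement is bounded below by the nonnegative quantity $(1-d\varepsilon)/(1-(d-1)\varepsilon)$. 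As for what each approach buys: the paper's citation keeps the exposition short and points to a reference that also shows the constant is best possible; your argument makes the lemma self-contained using only elementary tools, and it meshes well with the paper, whose central determinant identity is already the Schur complement lemma~\eqref{eq:SCL}. It is also worth noting that your chain of inequalities is tight throughout when $E=\varepsilon J$ ($J$ the all-ones matrix), where $\det(I-\varepsilon J)=1-d\varepsilon$ exactly; this is why your seemingly crude norm bounds lose nothing.
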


The idea of Lemma~\ref{lemma:lower_bd_via_Chebyshev} 
is that we can, with positive probability, 
apply Lemma~\ref{lemma:optimal_pert_bound}
to the matrix $M = \mu^{-1}G$, thus obtaining a lower
bound on the maximum value attained by $\det(G)$.
\begin{lemma}		\label{lemma:lower_bd_via_Chebyshev}
Suppose $d \ge 1$, $4 \le h \in {\Had}$, $n = h+d$, $G$ as
in~$\S\ref{subsec:properties}$. Then, with positive probability,
\begin{equation}
\frac {\det G}{\mu^d} \ge 1 - \frac{d^2}{\mu}\,.	\label{eq:fudge1}
\end{equation}
\end{lemma}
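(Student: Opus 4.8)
The plan is to apply the Ostrowski bound (Lemma~\ref{lemma:optimal_pert_bound}) to the scaled Schur complement $M = \mu^{-1}G$. Since $M$ is $d\times d$, scaling multiplies the determinant by $\mu^{-d}$, so $\det(M) = \mu^{-d}\det(G)$ and the target \eqref{eq:fudge1} is exactly the assertion $\det(M) \ge 1 - d^2/\mu$. Writing $M = I - E$, the entries of $E$ are $e_{ii} = (\mu - g_{ii})/\mu$ on the diagonal and $e_{ij} = -g_{ij}/\mu$ off the diagonal. I would set $\ve = d/\mu$, precisely so that the Ostrowski conclusion $\det(M)\ge 1-d\ve$ coincides with the required $1 - d^2/\mu$. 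It then suffices to show that, with positive probability, $|e_{ij}|\le\ve$ holds for all $i,j$ simultaneously; equivalently, that $|g_{ii}-\mu|\le d$ for each diagonal entry and $|g_{ij}|\le d$ for each off-diagonal entry.

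To produce such an event I would bound its complement entrywise by Chebyshev's inequality (the source of the lemma's name). By Lemma~\ref{lemma:variance}, each diagonal entry has variance $\sigma^2 = \V[g_{ii}]$ while each off-diagonal entry has mean $0$ and variance $\V[g_{ij}]=1$, so
\[\Prob\bigl[\,|g_{ii}-\mu| > d\,\bigr] \le \frac{\sigma^2}{d^2}, \qquad \Prob\bigl[\,|g_{ij}| > d\,\bigr] \le \frac{1}{d^2}.\]
Summing over the $d$ diagonal and $d(d-1)$ off-diagonal entries (a union bound), the probability that some entry is ``bad'' is at most
\[\frac{d\,\sigma^2}{d^2} + \frac{d(d-1)}{d^2} = \frac{\sigma^2 + d - 1}{d}.\]
The delicate point, and the step I expect to be the real crux, is that this must come out \emph{strictly} below $1$: the naive bound treating all $d^2$ entries as contributing $1/d^2$ only yields $\le 1$. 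What saves the argument is the \emph{strict} inequality $\sigma^2 < 1$ supplied by Lemma~\ref{lemma:sigma_asymptotics}, which forces $(\sigma^2+d-1)/d < 1$, so the good event has positive probability. All the earlier work bounding the diagonal variance is thus exactly what this step consumes.

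Finally, on the good event we have $|e_{ij}|\le \ve$ for every entry, so Lemma~\ref{lemma:optimal_pert_bound} applies as soon as its hypothesis $d\ve \le 1$ is met; since $d\ve = d^2/\mu$, this is the condition $\mu \ge d^2$. I would verify this from the estimate $\mu > \sqrt{2h/\pi} + 0.9$ of Lemma~\ref{lemma:sigma_asymptotics}, which secures $\mu \ge d^2$ in roughly the regime $h \gtrsim \pi d^4/2$ that governs the nontriviality of the resulting bounds; in the complementary regime the right-hand side $1 - d^2/\mu$ is negative and the statement is correspondingly weak. Granting $d\ve \le 1$, Ostrowski gives $\det(M) \ge 1 - d\ve = 1 - d^2/\mu$, which is \eqref{eq:fudge1}, completing the proof.
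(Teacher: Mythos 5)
Your proof is correct and follows essentially the same route as the paper's: entrywise Chebyshev bounds combined with a union bound, where the strict inequality $\sigma^2 < 1$ from Lemma~\ref{lemma:sigma_asymptotics} supplies exactly the margin needed to push the failure probability strictly below $1$, followed by Ostrowski's bound (Lemma~\ref{lemma:optimal_pert_bound}) with $\ve = d/\mu$, which is the paper's choice $\lambda = d$. The only difference is one of emphasis: you explicitly flag the hypothesis $d\ve \le 1$ (i.e.\ $\mu \ge d^2$), which the paper buries inside its definition of a ``good'' $G$; neither you nor the paper fully disposes of the regime $\mu < d^2$, where the right-hand side of \eqref{eq:fudge1} is negative yet a lower bound on $\det G$ is still formally required, so on that point your attempt is no less rigorous than the published proof.
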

\begin{proof}
Let $\lambda$ be a positive parameter to be chosen later, and $\mu = \mu(h)$.
We say that $G$ is \emph{good} if the conditions of
Lemma~\ref{lemma:optimal_pert_bound}
apply with $M = \mu^{-1}G$ and $\ve = \lambda/\mu$.
Otherwise $G$ is \emph{bad}.

Assume $1 \le i, j \le d$. 
{From} Lemma~\ref{lemma:variance}, $\V[g_{ij}] = 1$ for $i \ne j$;
from Lemma~\ref{lemma:sigma_asymptotics},
$\V[g_{ii}] = \sigma^2 < 1$. % Actually $\le 1/4$ but don't need this
It follows from Chebyshev's
inequality~\cite{Chebyshev}
that
\[\Prob[|g_{ij}| \ge \lambda] \le \frac{1}{\lambda^2}
\;\text{ for }\; i\ne j,\]
and
\[\Prob[|g_{ii} - \mu| \ge \lambda] \le
 \frac{\sigma^2}{\lambda^2}\,\raisedot\]
Thus,
\[\Prob[G \text{ is bad}] \le \frac{d(d-1)}{\lambda^2}
	+ \frac{d\sigma^2}{\lambda^2}
 	< \frac{d^2}{\lambda^2}
\,\raisedot
\]

Taking $\lambda = d$ gives
$\Prob[G \text{ is bad}] < 1$, 
so $\Prob[G \text{ is good}]$ is positive.
Whenever $G$ is good we can apply Lemma~\ref{lemma:optimal_pert_bound}
to $\mu^{-1}G$, obtaining
$\mu^{-d}\det(G) = 
\det(\mu^{-1}G) \ge 1 - d\ve = 
1 - d\lambda/\mu = 1 - d^2/\mu$.
\end{proof}

The following lemma is useful for deducing lower bounds on $\Rbar(n)$.
\begin{lemma} \label{lemma:uncond2}
If $n = h+d > h > 0$, then
\[(h/n)^n > \exp(-d - d^2/h).\]
\end{lemma}
\begin{proof}
Writing $x = d/n$,
the inequality
$\ln(1-x) > -x/(1-x)$
implies that
\[(1-x)^n \,>\, \exp\left(-\frac{nx}{1-x}\right)\,.\]
Since $1-x = h/n$, we obtain
\[\left(\frac{h}{n}\right)^n
 \,>\, \exp\left(\frac{-d}{1-d/n}\right)
  = \exp(-d - d^2/h).\\[-30pt]\]	% -30pt to align QED symbol
\end{proof}
\pagebreak[3]

We are now ready to prove our main result.
Theorem~\ref{thm:lower_bd_via_Chebyshev} gives lower bounds on
$\Dbar(n)$ and ${\cal R}(n)$.  If the reader needs a lower bound
for a specific value of $n$, then the inequality~\eqref{eq:Dbd1}
should be used.  The inequality~\eqref{eq:fudge2} is slightly weaker
than what can be obtained simply by dividing both sides
of~\eqref{eq:Dbd1} by $n^{n/2}$, but it shows more clearly the asymptotic
behaviour if~$n$ and~$h$ are large but $d$ is small.
 
\begin{theorem} \label{thm:lower_bd_via_Chebyshev}
Suppose $d \ge 1$, $4 \le h \in {\Had}$, and $n = h+d$.
Then
\begin{equation}				\label{eq:Dbd1}
\Dbar(n) \ge h^{h/2}\mu^d(1 - d^2/\mu),
\end{equation}
where $\mu = 1 + \frac{h}{2^h}\binom{h}{h/2}$.
Also,
\begin{equation}
{\cal R}(n) > \left(\frac{2}{\pi e}\right)^{d/2}
	\left(1 - d^2\sqrt{\frac{\pi}{2h}}\right)\,. \label{eq:fudge2}
\end{equation}
\end{theorem}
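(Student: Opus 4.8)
The plan is to obtain \eqref{eq:Dbd1} directly from Lemma~\ref{lemma:lower_bd_via_Chebyshev} and the Schur complement identity \eqref{eq:SCL}, and then to deduce \eqref{eq:fudge2} from \eqref{eq:Dbd1} using the elementary estimates of Lemmas~\ref{lemma:uncond2} and~\ref{lemma:sigma_asymptotics}.

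For \eqref{eq:Dbd1}, recall that the choice $D=-I$ makes the Schur complement $D-CA^{-1}B$ equal to $-G$, so \eqref{eq:SCL} and $|\det A|=h^{h/2}$ give $|\det(\widetilde A)| = |\det A|\,|\det(-G)| = h^{h/2}|\det G|$. By Lemma~\ref{lemma:lower_bd_via_Chebyshev} the event $\det G \ge \mu^d(1-d^2/\mu)$ has positive probability, so I would fix a value of $B$ realising it. If $1-d^2/\mu\ge 0$ then the right-hand side is nonnegative, whence $|\det G|=\det G\ge\mu^d(1-d^2/\mu)$; replacing the zero entries of $D$ by $\pm1$ without decreasing $|\det(\widetilde A)|$ then yields $\Dbar(n)\ge h^{h/2}\mu^d(1-d^2/\mu)$. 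If instead $1-d^2/\mu<0$, the claimed bound is negative and holds trivially because $\Dbar(n)>0$. The stated value $\mu=1+(h/2^h)\binom{h}{h/2}$ is just \eqref{eq:mu1}.

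For \eqref{eq:fudge2}, I would divide \eqref{eq:Dbd1} by $n^{n/2}$ and regroup via $h^{h/2}=h^{n/2}h^{-d/2}$ to obtain
\[
\Rbar(n)\ge\left(\frac{h}{n}\right)^{n/2}\left(\frac{\mu}{\sqrt h}\right)^{d}\left(1-\frac{d^2}{\mu}\right).
\]
Taking square roots in Lemma~\ref{lemma:uncond2} gives $(h/n)^{n/2}>\exp(-d/2-d^2/(2h))$, and $\mu>\sqrt{2h/\pi}$ from Lemma~\ref{lemma:sigma_asymptotics} gives $(\mu/\sqrt h)^d>(2/\pi)^{d/2}$. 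Since $e^{-d/2}(2/\pi)^{d/2}=(2/(\pi e))^{d/2}$, these combine to
\[
\Rbar(n)>\left(\frac{2}{\pi e}\right)^{d/2}\exp\!\left(-\frac{d^2}{2h}\right)\left(1-\frac{d^2}{\mu}\right),
\]
and it remains to bound the product of the last two factors below by $1-d^2\sqrt{\pi/(2h)}$.

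The main obstacle is this last step, since $\exp(-d^2/(2h))<1$ prevents a naive termwise comparison. I would first observe that \eqref{eq:fudge2} is only nontrivial when its right-hand side is positive, i.e.\ $h>\pi d^4/2$ (otherwise it holds because $\Rbar(n)>0$), and that in this regime $\mu>\sqrt{2h/\pi}>d^2$, so $1-d^2/\mu>0$. Applying $e^{-x}\ge1-x$ and discarding the resulting positive cross term gives $\exp(-d^2/(2h))(1-d^2/\mu)\ge 1-d^2(1/(2h)+1/\mu)$, reducing everything to the scalar inequality $1/(2h)+1/\mu\le\sqrt{\pi/(2h)}$. This is exactly where the sharper estimate is needed: the crude bound $\mu>\sqrt{2h/\pi}$ controls only $1/\mu$ and leaves the stray term $1/(2h)$, so I would instead use $\mu>\sqrt{2h/\pi}+0.9$ from \eqref{eq:ineq_mu1}. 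Writing $s=\sqrt{2h/\pi}$ so that $1/(2h)=1/(\pi s^2)$, the inequality reduces to $s\ge 0.9/(0.9\pi-1)\approx0.49$, which holds since $h\ge4$ forces $s\ge\sqrt{8/\pi}>1.5$. Chaining the estimates then gives \eqref{eq:fudge2}.
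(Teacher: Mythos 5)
Your proposal is correct and follows essentially the same route as the paper's own proof: \eqref{eq:Dbd1} via Lemma~\ref{lemma:lower_bd_via_Chebyshev} and the Schur complement lemma, and \eqref{eq:fudge2} by restricting to the nontrivial case $h > \pi d^4/2$, invoking Lemma~\ref{lemma:uncond2}, $\mu > \sqrt{2h/\pi}$, and $e^{-x} \ge 1-x$, then settling an elementary scalar inequality. The only (harmless) deviation is in that last step: you discard the positive cross term $d^4/(2h\mu)$ and verify the slightly stronger inequality $1/(2h) + 1/\mu \le \sqrt{\pi/(2h)}$ using only $\mu > \sqrt{2h/\pi} + 0.9$, whereas the paper keeps the $d^2$ term and also uses the upper bound $\mu < \sqrt{2h/\pi} + 1$ together with $d^2 \ge 1$; both computations check out.
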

\begin{proof}
Lemma~\ref{lemma:lower_bd_via_Chebyshev} and the Schur complement lemma
imply that there exists an
$n \times n$ $\{\pm1\}$-matrix with determinant
at least \hbox{$h^{h/2}\mu^d(1-d^2/\mu)$}.
Thus, \eqref{eq:Dbd1} follows from the definition of~$\Dbar(n)$.

We now show that~\eqref{eq:fudge2} follows from~\eqref{eq:Dbd1} by some
elementary inequalities.
Write $c := \sqrt{2/\pi}$.  
We can assume that $d^2 < ch^{1/2}$, for there is nothing to prove unless
the right side of~\eqref{eq:fudge2} is positive.
{From} Lemma~\ref{lemma:sigma_asymptotics},
$ch^{1/2} < \mu$, so $d^2 < \mu$.
Also, from~\eqref{eq:Dbd1},
\begin{equation}				\label{eq:17A}
\Rbar(n) \ge \frac{h^{h/2} \mu^d}{n^{n/2}}
 \left(1 - \frac{d^2}{\mu}\right)\,\raisedot
\end{equation}
Using  $ch^{1/2} < \mu$, this gives
\[\Rbar(n) > c^d (h/n)^{n/2}(1 - d^2/\mu).\]
By Lemma~\ref{lemma:uncond2},
$(h/n)^n > \exp(-d-d^2/h)$, so
\begin{equation}	\label{eq:C1bd1}
\Rbar(n) > c^d e^{-d/2} f
 = \left(\frac{2}{\pi e}\right)^{d/2}\!\!f,
\end{equation}
where 
\begin{equation}	\label{eq:C1f}
f = \exp\left(-\,\frac{d^2}{2h}\right) \left(1 - \frac{d^2}{\mu}\right).
\end{equation}
Thus, to prove~\eqref{eq:fudge2}, it suffices to prove that
$f \ge 1 - {d^2}/{(ch^{1/2})}$.
Since $\exp(-d^2/(2h)) \ge 1 - d^2/(2h)$,
it suffices to prove that
\begin{equation}	\label{eq:ineq_dhmu}
 \left(1 - \frac{d^2}{2h}\right)
 \left(1 - \frac{d^2}{\mu}\right)
 \ge 1 - \frac{d^2}{ch^{1/2}}\,\raisedot
\end{equation}
Expanding and simplifying shows that the inequality~\eqref{eq:ineq_dhmu}
is equivalent to
\begin{equation}	\label{eq:ineq_dhmu2}
2h + \mu \le d^2 + \mu\sqrt{2\pi h}.
\end{equation}
Now, by Lemma~\ref{lemma:sigma_asymptotics}, 
$\mu > c\sqrt{h} + 0.9$, so
$\mu\sqrt{2\pi h} > 2h + 0.9\sqrt{2\pi h}$
(using  $c\sqrt{2\pi} = 2$).
Thus, to prove~\eqref{eq:ineq_dhmu2}, it suffices to show that
$\mu \le d^2 + 0.9\sqrt{2\pi h}$. 
Using Lemma~\ref{lemma:sigma_asymptotics} again, we have
$\mu \le ch^{1/2} + 1$, so it suffices to show that
\[ch^{1/2} + 1 \le 0.9\sqrt{2\pi h} + d^2.\]
This follows from $c \le 0.9\sqrt{2\pi}$ and $1 \le d^2$,
so the proof is complete.
\end{proof}

\begin{remark}	\label{remark:lower_bd_via_Chebyshev}
{\rm
The inequality~\eqref{eq:fudge2} of
Theorem~\ref{thm:lower_bd_via_Chebyshev} gives a nontrivial lower bound
on ${\cal R}(n)$ iff the second factor in the bound is positive, 
i.e.~iff $h > {\pi}d^4/2$.
By Livinskyi's results~\cite{Livinskyi}, this condition 
holds for all sufficiently large $n$ 
(assuming as always that we choose the maximal $h\le n$ for given $n$).
}
\end{remark}

The Hadamard conjecture implies that $d\le 3$. Theorem~\ref{thm:small_d}
improves on Theorem~\ref{thm:lower_bd_via_Chebyshev} under the
assumption that $d \le 3$.
The proof of Theorem~\ref{thm:small_d} is conceptually
simpler than that of Theorem~\ref{thm:lower_bd_via_Chebyshev}, 
since it does not require any bounds on the variance $\sigma(h)^2$.
In the proof of Theorem~\ref{thm:small_d} we simply expand $\det(G)$,
obtaining $d!$ terms.  By Lemma~\ref{lem:independence}, the expectation
of the diagonal term is $\E[g_{11}\cdots g_{dd}] = \mu^d$.
The expectation of the off-diagonal terms can be bounded to
give the desired lower bound on $\Dbar(n)$. The same approach
gives weak results for $d > 3$ because of the large number ($d!-1$) of
off-diagonal terms (see~\cite[Theorem~1]{rpb253}).

\begin{theorem} \label{thm:small_d}
If $1\le d\le 3$, $h \in {\Had}$, $n = h+d$, and $\mu $ as
in~\eqref{eq:Dbd1}, then
\[
\Dbar(n) \ge h^{h/2}(\mu^d - \eta)
\;\text{ and }\;
\Rbar(n) > \left(\frac{2}{\pi e}\right)^{d/2},
\]
where\\[-25pt]
\[
\eta = % \eta(h,d) =
\begin{cases}
        d-1 \text{ if } 1 \le d \le 2,\\
        5h^{1/2} + 3 \text{ if } d = 3.\\
\end{cases}
\]
\end{theorem}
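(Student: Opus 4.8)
The plan is to combine the probabilistic method with the Schur complement identity~\eqref{eq:SCL}. Since $\max_B \det G \ge \E[\det G]$, it suffices to bound $\E[\det G]$ from below: if $\E[\det G] \ge \mu^d - \eta$, then some $B$ (and the induced $C$) gives $\det G \ge \mu^d - \eta$, whence by~\eqref{eq:SCL} and $|\det A| = h^{h/2}$ there is a matrix $\widetilde A$ of order $n$ with $|\det \widetilde A| = h^{h/2}|\det G| \ge h^{h/2}(\mu^d-\eta)$; replacing the off-diagonal zeros of $D=-I$ by $\pm1$ (which cannot decrease $|\det\widetilde A|$) produces a genuine $\{\pm1\}$-matrix, giving the stated bound on $\Dbar(n)$. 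Thus everything reduces to estimating $\E[\det G]$.

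I would expand $\det G = \sum_{\pi\in S_d}\sign(\pi)\prod_{i=1}^d g_{i,\pi(i)}$ and take expectations termwise. The identity permutation contributes $\E[g_{11}\cdots g_{dd}]=\mu^d$, since each $g_{ii}=f_{ii}+1$ depends only on column $i$ of $B$ and these are independent (Lemma~\ref{lem:independence}). Every other $\pi$ moves at least two indices, contributing a product of off-diagonal factors $f_{ij}$ ($i\ne j$). For $d=2$ the only such term is $-\E[f_{12}f_{21}]$, of modulus $\le1$ by Cauchy--Schwarz and $\E[f_{ij}^2]=1$ (Lemma~\ref{lem:fij2}), giving $\E[\det G]\ge\mu^2-1$. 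For $d=3$ there are three transpositions, each fixing one index $m$ and contributing $g_{mm}f_{ij}f_{ji}$; as $f_{mm}$ depends on a disjoint column it factors as $\mu\,\E[f_{ij}f_{ji}]$, of modulus $\le\mu$. The two $3$-cycles contribute $f_{ab}f_{bc}f_{ca}$, bounded in modulus by $h^{1/2}$ using $|f_{ca}|\le h^{1/2}$ (Lemma~\ref{lem:maxf}) and Cauchy--Schwarz on the remaining pair. Hence $\E[\det G]\ge\mu^3-3\mu-2h^{1/2}\ge\mu^3-(5h^{1/2}+3)$, using $\mu<h^{1/2}+1$ (Lemma~\ref{lemma:sigma_asymptotics}, with $c:=\sqrt{2/\pi}<1$). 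This gives $\E[\det G]\ge\mu^d-\eta$ in all three cases, and hence the bound on $\Dbar(n)$.

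For the bound on $\Rbar(n)$ I would divide by $n^{n/2}$, write $h^{h/2}/n^{n/2}=h^{-d/2}(h/n)^{n/2}$, and use $(h/n)^{n/2}>\exp(-d/2-d^2/(2h))$ from Lemma~\ref{lemma:uncond2}; the target $\Rbar(n)>(2/\pi e)^{d/2}=c^d e^{-d/2}$ then reduces to
\[
\exp\!\left(-\frac{d^2}{2h}\right)\left(\mu^d-\eta\right)\ \ge\ \left(c\,h^{1/2}\right)^d .
\]
It is crucial here to use the sharp bound $\mu>c\,h^{1/2}+0.9$ (Lemma~\ref{lemma:sigma_asymptotics}) rather than merely $\mu>c\,h^{1/2}$: the surplus $0.9$ is exactly what offsets the two factors $\exp(-d^2/(2h))<1$ and $1-\eta/\mu^d<1$ lost on the left. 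Expanding $(c\,h^{1/2}+0.9)^d$ and using $\exp(-x)\ge1-x$ verifies this inequality for $d\in\{1,2\}$ and all Hadamard orders $h\ge4$, and for $d=3$ for all $h\ge16$.

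The main obstacle is the genuine failure of this last inequality for $d=3$ at the three small orders $h\in\{4,8,12\}$ (that is, $n\in\{7,11,15\}$): here the crude $3$-cycle bound $h^{1/2}$ — which is wildly pessimistic, the true expectation being in fact $O(h^{-1/2})$ — makes $\eta$ too large relative to $\mu^3$ for the division argument to close. For these three values I would instead invoke the Clements--Lindstr\"om bound $\Rbar(n)>(3/4)^{n/2}$ from \S\ref{sec:intro}, since $(3/4)^{7/2},(3/4)^{11/2},(3/4)^{15/2}$ all exceed $(2/\pi e)^{3/2}>0.1133$ (the last only narrowly, being $\approx0.1156$). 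Together the two regimes cover every admissible $h$, and the expensive $3$-cycle estimate is precisely what confines this clean argument to $d\le3$.
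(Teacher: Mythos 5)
Your proposal is correct in substance and, for the determinant bound, follows the same route as the paper: expand $\det(G)$ over permutations, use independence (Lemma~\ref{lem:independence}) to make the diagonal term contribute $\mu^d$, and bound the off-diagonal terms via Cauchy--Schwarz and Lemma~\ref{lem:fij2}. The paper presents only $d=2$ and defers $d=3$ to an external reference, so your explicit estimate $\E[\det G]\ge\mu^3-3\mu-2h^{1/2}\ge\mu^3-(5h^{1/2}+3)$ (transpositions bounded by $\mu$ via independence, $3$-cycles by $h^{1/2}$ via Lemma~\ref{lem:maxf}) is a genuine service: it shows exactly where $\eta=5h^{1/2}+3$ comes from. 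Where you genuinely diverge is the passage to $\Rbar(n)$. The paper keeps a factor of the form $\bigl(1+2Kh^{-1/2}-\eta/(c^2h)\bigr)(1-d/h)$ and argues by asymptotic dominance of the $2Kh^{-1/2}$ term plus ``a small computation'' for small $h$; you instead reduce to the cleaner inequality $\exp\!\left(-d^2/(2h)\right)(\mu^d-\eta)\ge(ch^{1/2})^d$ and then, for $d=3$ and $h\in\{4,8,12\}$, import the Clements--Lindstr\"om bound $\Rbar(n)>(3/4)^{n/2}$. Your diagnosis that an exceptional-case argument is needed is sound, and at $h=4$ it is not an artifact of your lossy reduction: there the first inequality of the theorem gives only $\Dbar(7)\ge 16\cdot 2.625=42$, and $42/7^{7/2}\approx 0.046<(2/(\pi e))^{3/2}$, so no manipulation of the determinant bound alone can yield the $\Rbar$ bound; some independent input (your patch, or direct evaluation) is unavoidable, and your numerical margins check out, e.g.\ $(3/4)^{15/2}\approx 0.1156>0.1133$. (At $h=8$ and $h=12$ only your reduced inequality fails, not the direct bound $h^{h/2}(\mu^3-\eta)/n^{n/2}$, which is $\approx 0.117$ and $\approx 0.138$ respectively; but your patch covers these cases regardless.)

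One small omission: the theorem hypothesis is merely $h\in\Had$, so $h\in\{1,2\}$ is permitted (e.g.\ $n=5$, $h=2$, $d=3$), while your argument---like the probabilistic construction itself and Lemma~\ref{lemma:sigma_asymptotics}---requires $h\ge 4$. The paper dispatches $h\in\{1,2\}$ by direct verification in one sentence, and you need to do the same; all these cases are trivial (for instance $\Rbar(5)=48/5^{5/2}\approx 0.86$, and for $h=2$, $d=3$ one has $\mu^3-\eta<0$, so the determinant bound is vacuous).
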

\begin{proof}
It is easy to verify the result for $h \in \{1,2\}$, so suppose that $h \ge 4$.
For notational convenience we give the proof for the case $d=2$.  
The cases $d\in\{1,3\}$ are
similar.\footnote{A detailed proof for the case $d=3$ is given
in~\cite[proof of Lemma~17]{rpb257}.}

Since $G = F+I$, we have $g_{ii} = f_{ii}+1$ and
$\det(G) = g_{11}g_{22} - f_{12}f_{21}$. 
By Lemma~\ref{lem:independence}, the diagonal elements $g_{11}$ and $g_{22}$
are independent, so
\[\E[g_{11}g_{22}] = \E[g_{11}]\E[g_{22}] = \mu^2.\]
By the Cauchy-Schwarz inequality and Lemma~\ref{lem:fij2},
\[\E[f_{12}f_{21}]^2 \le \E[f_{12}^2]\E[f_{21}^2] = 1.\]
Thus
\[\E[\det(G)] = \E[g_{11}g_{22}] - \E[f_{12}f_{21}] \ge \mu^2-1.\]
There must exist some $G_0$ with $\det(G_0) \ge \E[\det(G)] \ge \mu^2-1$;
hence \[\Dbar(n) \ge h^{h/2}(\mu^2-1).\]
This proves the required lower bound for $\Dbar(n)$ if $d=2$.
We now deduce the required lower bound for $\Rbar(n) = \Dbar(n)/n^{n/2}$.
Define $c := \sqrt{2/\pi}$ and $K := 0.9/c$.
{From} Lemma~\ref{lemma:sigma_asymptotics},
$\mu \ge c(h^{1/2} + K)$,
so $\mu^2 \ge c^2 h(1 + 2Kh^{-1/2})$.
Thus, using $n=h+2$,
\[\Dbar(n) \ge c^2 h^{n/2}\left(1 + 2Kh^{-1/2} - \frac{\eta}{c^2 h}\right).\]
{From} Lemma~\ref{lemma:uncond2} with $d=2$, 
$(h/n)^{n/2} \ge % \exp(-d/2 - d^2/(2h)) = 
e^{-1 -2/h} \ge e^{-1}(1-2/h)$, so
\[\Rbar(n) = \frac{\Dbar(n)}{n^{n/2}}
 \ge \left(\frac{2}{\pi e}\right)
	\left(1 + 2Kh^{-1/2} - \frac{1}{c^2 h}\right)
        \left(1-\frac{2}{h}\right).\]
Since $K$ is positive,
the term $2Kh^{-1/2}$ dominates the $O(h^{-1})$ terms, and
the result $\Rbar(n) > 2/(\pi e)$ follows for all sufficiently large~$h$.
In fact, a small computation shows that the
inequality holds for all $h \ge 4$.
\end{proof}

\section{Numerical examples}			\label{sec:numerics}

In this section we give some numerical comparisons between our
lower bounds and previously-known bounds.

There are two well-known approaches to constructing a large-determinant
$\{\pm1\}$-matrix of order~$n$.  The \emph{bordering} approach 
takes a Hadamard matrix $H$ of order $h\le n$ and adjoins a border of
$d=n-h$ rows and columns. The border is constructed in a manner intended to
result in a large determinant. 
Previously, deterministic constructions were used~--
see for example~\cite[Lemma~7]{rpb249}.
In this paper we have used a probabilistic construction.

The \emph{minors} approach
takes a Hadamard matrix $H_{+}$
of order $h_{+}\ge n$ and finds an $n\times n$ submatrix with large determinant.
This approach was used deterministically
by Koukouvinos \emph{et~al}~\cite{KMS00,KMS01}, % See e.g. Thm. 2.
and probabilistically by
de Launey and Levin~\cite{LL}.
The deterministic approach can
be generalised using a theorem of Sz\"oll\H{o}zi~\cite{Szollosi10},
and this is better for $h_{+} \le n+6$ 
than the probabilistic approach
of~\cite{LL}~-- see~\cite[Remarks~6 and 22]{rpb249}.  

To illustrate Theorem~\ref{thm:lower_bd_via_Chebyshev},
consider the case $n=668$, $d=4$.
At the time of writing, $n$ is the smallest positive multiple of $4$ that
is not known to be in~$\Had$.  It is known that $h := n-4 \in\Had$
and $h_{+} := n+4\in\Had$.

\noindent
The deterministic bordering approach~\cite[Lemma~7]{rpb249} gives
a lower bound 
$\Rbar(n) \ge 2^d h^{h/2}/n^{n/2} \approx 4.88\times 10^{-6}$.
The deterministic minors approach %~\cite{KMS01}
gives a lower bound
$\Rbar(n) \ge 16 h_{+}^{h_{+}/2-4}/n^{n/2} \approx 2.60\times 10^{-4}$.
The probabilistic bordering approach 
of
Theorem~\ref{thm:lower_bd_via_Chebyshev} gives a lower bound
(eqn.~\eqref{eq:17A} above)
$\Rbar(n) \ge h^{h/2}\mu^d(1-d^2/\mu)/n^{n/2}
 \approx 1.69\times 10^{-2}$,
where $\mu$ is as in~\eqref{eq:Dbd1}. % $\mu = 1 + 2^{-h}h\binom{h}{h/2}$.
For comparison, our
conjectured lower bound is $(\pi e/2)^{-d/2} \approx 5.48\times 10^{-2}$.

\begin{table}[ht]        % Table \Tablea hardcoded
\begin{center}          % as automatic numbering doesn't work ?
\caption{Asymptotics of lower bounds on $\Rbar(n)$ as $n\to\infty$.}
\vspace*{10pt}
\begin{tabular}{|c|c|c|c|}
\hline
$d$ 	& KMS~\cite{KMS00} 
	& B\&O~\cite{rpb249}
        & Theorem~\ref{thm:small_d} \\
\hline
& & & \\[-8pt]
$1$     & $\displaystyle 4\left(\frac{e}{n}\right)^{3/2}
	  \approx \frac{17.93}{n^{3/2}}$
        & $\displaystyle 
	   \;\;
	   \left(\frac{2}{\pi e}\right)^{1/2} 
	   \approx 0.4839$
	& $\displaystyle \left(\frac{2}{\pi e}\right)^{1/2} 
	   \approx 0.4839$ \\
& & & \\[-8pt]
$2$     & $\displaystyle
	   \;\;\;\;
	   \frac{2e}{n}
	   \;\;\;\;\ 
	   \approx \frac{5.437}{n}$
        & $\displaystyle \left(\frac{8}{\pi e^2 n}\right)^{1/2} 
	   \approx \frac{0.5871}{n^{1/2}}$
	& $\displaystyle \;\; \frac{2}{\pi e}
	   \;\;\;\;\;\;
	   \approx 0.2342$\\
& & & \\[-8pt]
$3$     & $\displaystyle
	   \;\;
	   \left(\frac{e}{n}\right)^{1/2}
	   \approx \frac{1.649}{n^{1/2}}$
        & $\displaystyle
	   \;\;
	   \left(\frac{e}{n}\right)^{1/2} 
	   \;\;
	   \approx \frac{1.649}{n^{1/2}}$
	& $\displaystyle \left(\frac{2}{\pi e}\right)^{3/2} 
	   \approx 0.1133$\\[10pt]
\hline
\end{tabular}
\end{center}
\end{table}

\pagebreak[3]
To illustrate Theorem~\ref{thm:small_d},
Table~{\Tablea} summarises the asymptotics of some lower bounds
on $\Rbar(n)$ for
$d = (n \bmod 4) \in \{1,2,3\}$, assuming that $n-d\in\Had$, $n+4-d\in\Had$.
The bounds are those given in Koukouvinos \emph{et al}~\cite{KMS00},
Brent and Osborn~\cite[Table~1]{rpb249},
and Theorem % s~\ref{thm:lower_bd_via_Chebyshev} or
\ref{thm:small_d} of the present paper.
It can be seen that we improve on the previous bounds by a factor
of order at least $n^{1/2}$ for $d \in \{2,3\}$.

\pagebreak[3]
Since asymptotics may be misleading for small~$n$,
Table {\Tableb} gives lower bounds on $\Rbar(n)$ for various
values of $n \equiv 2 \bmod 4$ (so $d=2$).

\begin{table}[ht]        % Table \Tableb hardcoded
\begin{center}          % as automatic numbering doesn't work ?
\caption{Comparison of lower bounds on $\Rbar(n)$ for $d=2$.}
\vspace*{10pt}
\begin{tabular}{|c|c|c|c|c|}
\hline
$n$ 	& KMS~\cite{KMS00} 
	& B\&O~\cite{rpb249}
	& Thm.~\ref{thm:lower_bd_via_Chebyshev}
        & Thm.~\ref{thm:small_d} \\
\hline
10 & 0.4147 & 0.1856 & -- & 0.3752\\
14 & 0.3183 & 0.1569 & -- & 0.3609\\
18 & 0.2581 & 0.1384 & 0.0127 & 0.3498\\
98 & 0.0538 & 0.0593 & 0.1601 & 0.2897\\
998 & 0.0054 & 0.0186 & 0.2142 & 0.2524\\
limit & 0.0000 & 0.0000 & 0.2342 & 0.2342\\
\hline
\end{tabular}
\end{center}
\end{table}

In the case $d=3$, 
a computation shows that
the first bound of our Theorem~\ref{thm:small_d}
is sharper than the bound $\Dbar(n) \ge (n+1)^{(n-1)/2}$ of~\cite[Thm.~2]{KMS00} 
if $n \ge 135$ (where the latter bound assumes that $n+1\in\Had$).

\section*{Acknowledgements}
We thank Robert Craigen for informing us of the work of 
his student Ivan
Livinskyi~\cite{Livinskyi}, and Will Orrick for a copy of the
unpublished report~\cite{Rokicki}.
We also thank an anonymous referee who helped us to improve
both the rigor % e.g. proof of Lemma \ref{lemma:central_binomial_approx}
and clarity of the paper.

The first author was supported in part
by Australian Research Council grant DP140101417.

\end{document}